\documentclass[10pt]{article}

\usepackage{eurosym}
\usepackage{hyperref}
\usepackage{amssymb}
\usepackage{amsfonts}
\usepackage{graphicx}
\usepackage{amsmath}
\usepackage{float,color,ulem}
\usepackage{epsf,epsfig,subfigure}
\usepackage{float,color,ulem}
\usepackage{ulem}
\usepackage{bbm}

\setcounter{MaxMatrixCols}{10}

\newfloat{figure}{H}{lof}
\newfloat{table}{H}{lot}
\floatname{figure}{\figurename}
\floatname{table}{\tablename}
\newtheorem{theorem}{Theorem}[section]

\newtheorem{assumption}[theorem]{Assumption}

\newtheorem{lemma}[theorem]{Lemma}

\newtheorem{proposition}[theorem]{Proposition}
\newtheorem{remark}[theorem]{Remark}

\hypersetup{colorlinks=true, linkcolor=blue, citecolor=red}
\newenvironment{proof}[1][Proof]{\textbf{#1.} }{\ \rule{0.5em}{0.5em}}
\newcommand{\R}{\mathbb{R}}
\numberwithin{equation}{section}
\allowdisplaybreaks
\oddsidemargin  20pt
\textwidth   5.3 in

\def\R{\mathbb{R}}
\newcommand{\Li}{{\mathrm {L} } }
\def\dd{\mathrm{d}}
\newcommand{\xk}[1]{\left( #1 \right)}

\begin{document}
		\title{\textbf{Global asymptotic stability for  Gurtin-MacCamy's population dynamics model}}
	\author{\textsc{Zhaohai Ma$^{(a)}$ and Pierre Magal$^{(b)}$\thanks{Corresponding author. e-mail: \href{mailto:pierre.magal@u-bordeaux.fr}{pierre.magal@u-bordeaux.fr}}  }\\
			{\small \textit{$^{(a)}$School of Science, China University of Geosciences}},\\  
			{\small \textit{ Beijing 100083, People's Republic of China}} \\
		{\small \textit{$^{(b)}$Univ. Bordeaux, IMB, UMR 5251, F-33400 Talence, France.}} \\
		{\small \textit{CNRS, IMB, UMR 5251, F-33400 Talence, France.}}
	}
	\maketitle

%
%
%
%


\date{}



\begin{abstract}
In this paper, we investigate the global asymptotic stability of an age-structured population dynamics model with a Ricker's type of birth function. This model is a hyperbolic partial differential equation with a nonlinear and nonlocal boundary condition. We prove a uniform persistence result for the semi-flow generated by this model. We obtain the existence of global attractors and we prove the global asymptotic stability of the positive equilibrium by using a suitable Lyapunov functional. Furthermore, we prove that our global asymptotic stability result is sharp, in the sense that Hopf bifurcation may occur as close as we want from the region global stability in the space of parameter.
\end{abstract}

\maketitle




\section{Introduction}
The age-structured population dynamics models have a long history starting from the articles of Lotka \cite{Lotka} in (1907) and the article of Sharpe and Lotka \cite{Sharpe-Lotka} (1911).  The first nonlinear age-structured model was introduced by Gurtin and  MacCamy  \cite{Gurtin-Maccamy-1974-ARMA} in the 70th. They proved a result of  {the} existence and uniqueness of the solutions and a local stability result for an {equilibrium}. More recently, age-structured models received a lot of interest in the literature and we refer to the books \cite{Iannelli, Magal-Ruan2018, Thieme, Webb} for more results about this topic.

The article is devoted to the global asymptotic stability of the following Gurtin-MacCamy's age-structured model,
\begin{equation}\label{1.1}
   \left\{
     \begin{array}{ll}
       (\partial_t + \partial_a  ) u(t, a ) = - \mu u(t, a), & t> 0, a> 0,    \\
          u(t, 0) = \alpha f\left(\displaystyle \int_{0}^{\infty} \beta (a) u(t, a) \dd a \right), & t> 0,
     \end{array}
   \right.
\end{equation}
with the initial distribution
$
u(0,a)=u_0(a) \in \Li^{1}_+ \left( \left( 0, +\infty \right), \R \right).
$
Here the parameters $\alpha> 0, \mu > 0$ and $\beta$ and $f$ satisfy the following Assumption \ref{ASS1.1}.

\begin{assumption} \label{ASS1.1} We assume that
$
f(u)=ue^{-u}, u\geq 0
$
and $\beta \in \Li^{\infty}_+ (0, +\infty)$ is normalized by
\begin{equation*}
   \int_{0}^{+\infty} \beta(a) e^{-\mu a} \dd a = 1.
\end{equation*}
\end{assumption}
To explain our result, we first need to recall that the solution integrated along the characteristic of the system \eqref{1.1} is
\begin{equation*}
u(t, a) =
\left\{
\begin{array}{cc}
e^{-\mu t} u_0(a - t),  & \text{ if }  a \geq t,\\
e^{-\mu a} b(t -a), &  \text{ if }   a \leq  t,
\end{array}
\right.
\end{equation*}
where $t \to b(t)$ is the unique continuous solution of the Volterra's integral equation
\begin{equation}\label{1.2}
b(t) = \alpha f\left( \int_{t}^{\infty } \beta(a) e^{-\mu t} u_0(a - t) \dd a+ \int_{0}^{t } \beta(a) e^{-\mu a} b(t-a) \dd a\right), \forall t \geq 0.
\end{equation}
For each $\alpha > 1$, the system \eqref{1.1} has a unique positive equilibrium
\begin{equation} \label{1.3}
\overline{u}(a) = \overline{u}(0) e^{-\mu a },
\end{equation}
and  $\bar{u}(0) = \ln \alpha > 0$ is the unique strictly positive solution of the scalar equation
\begin{equation*}
\overline{u} (0) = \alpha f( \overline{u}  (0)) .
\end{equation*}
If we assume that  $\beta(.)\, e^{- \mu .} \overset{\ast}{\rightharpoonup}  \delta_{\tau}(a)$ for the weak star topology (in the dual of the space of some suitable exponential bounded continuous functions) and where $ \delta_{\tau}(a)$ is the Dirac mass at $\tau>0$. This  implies in particular
$$
\int_0^\infty \beta(a)\, e^{- \mu a} \varphi(a) \dd a \to \varphi(\tau), \forall \varphi \in C_c (0,+\infty),
$$
where $C_c (0,+\infty)$ is the space of continuous functions with compact support on $ (0,+\infty)$.

Then, by replacing $\beta(a)\, e^{- \mu a} $ by $\delta_{\tau}(a)$) in equation \eqref{1.2}, we obtain a continuous-time difference equation of the form
\begin{eqnarray} \label{1.4}
b(t)=\alpha f(b(t-\tau)), \forall t \geq \tau.
\end{eqnarray}

In Theorem \ref{TH-GAS} we will see that the positive equilibrium $\overline{u}$ is global stable for the system \eqref{1.1} whenever
\begin{equation} \label{1.5}
1 < \alpha \leq e^{2}.	
\end{equation}
As far as we know, global asymptotic stability and Lyapunov functional for the age-structured model was first considered by Magal, McCluskey, and Webb \cite{Magal-McCluskey-Webb}. We also refer to Smith and Thieme \cite{Smith-Thieme} for another approach for the same infection age-structured model. Since then, the Lyapunov method of \cite{Magal-McCluskey-Webb} has applied to many  {examples} of age-structured models in the literature. In the present article, we apply the recent results of Magal, Seydi, and Wang \cite{Magal-Seydi-Wang}, and we reconsider the decomposition of $L^1_+$ into two invariant subregions which  {are} used later on in the uniform persistence section. The invariance of subregions is easy to explain.
Indeed, our global stability result should reflect the fact that if we start only with old individuals, who can no longer reproduce, the population goes extinct. Therefore the state-space decomposition of $L^1_+$ should reflect this simple fact (see Section \ref{section3} for precisions).  This idea was neglected in many articles and we hope that this new method will help to understand this part of the problem.

One may also observe that condition \eqref{1.5} corresponds to the global asymptotic stability of the positive equilibrium $\overline{b}=\ln(\alpha)$ for the difference equation \eqref{1.4}. Moreover, $\alpha=e^2$ corresponds to  {a} Hopf's bifurcation point for the difference equation  \eqref{1.4} (when it is regarded as a  {discrete-time} problem). Indeed,
$$
\alpha f'(\overline{b})=\alpha [1- \ln(\alpha )] e^{- \ln(\alpha )}=-1.
$$
Hopf's bifurcation for  {the} model \eqref{1.1} was first considered by Magal and Ruan \cite{Magal-Ruan2009, Magal-Ruan2018} and Liu, Magal and Ruan \cite{Liu}. In Section \ref{section7}, we prove that for each $\alpha^\star > e^2$ (as close as we want from $e^2$) we can find a function $\beta(a)$ such that Hopf bifurcation occurs at $\alpha=\alpha^\divideontimes \in (e^2, \alpha^\star )$ for the system  \eqref{1.1}. In this sense, our global stability result is sharp with respect to the parameter $\alpha$.

The paper is organized as follows. In Section \ref{section2}, we state a dissipativity result. The invariant subregion under the semiflow generated by \eqref{1.1} is studied in Section \ref{section3}. In Section \ref{section4}, we consider the asymptotic smoothness of the semiflow generated by \eqref{1.1}.
In Section \ref{section5}, we prove that the semiflow generated by \eqref{1.1}  is uniformly persistent in the invariant subregion  $\widehat{M}_0$ and obtain the existence of the global attractor. In section \ref{section6}, we prove that the positive equilibrium is  global asymptotically stable by using the Lyapunov functional. Section \ref{section7} is devoted  {to} the existence of Hopf's bifurcation whenever $\alpha> e^2$.

\section{ Dissipativity}
\label{section2}
Set
$
\Lambda= \displaystyle \frac{\alpha}{\mu} \sup\limits_{x \geq 0}  f(x).
$
Consider the total number of individuals at time $t$
$$
U(t)=\int_0^\infty u(t,a)\dd a, \forall t\geq 0 \text{ and } U_0=\int_0^\infty u_0(a)\dd a.
$$
Then we have the following result about the total number of individuals $U(t)$ .

 \begin{lemma} \label{LE2.1} We have
$$
U(t)\leq e^{-\mu t}U_0+ \left(1- e^{-\mu t} \right) \Lambda, \forall t \geq 0.
$$
\end{lemma}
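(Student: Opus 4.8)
The plan is to derive an ordinary differential inequality for the total population $U(t)$ by integrating the PDE over all ages $a\in(0,\infty)$, and then to solve that inequality using a standard comparison/Gronwall argument.

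First I would differentiate $U(t)=\int_0^\infty u(t,a)\,\dd a$ in time and substitute the first equation of \eqref{1.1}. Integrating the transport equation $\partial_t u = -\partial_a u - \mu u$ over $a\in(0,\infty)$ gives
\begin{equation*}
U'(t) = -\int_0^\infty \partial_a u(t,a)\,\dd a - \mu \int_0^\infty u(t,a)\,\dd a = \big[-u(t,a)\big]_{a=0}^{a=\infty} - \mu U(t).
\end{equation*}
Assuming the solution decays as $a\to\infty$ (which is justified by the explicit characteristics formula, since $u(t,a)=e^{-\mu t}u_0(a-t)$ for $a\ge t$ and $u_0\in L^1$), the boundary term reduces to $u(t,0)$, so that
\begin{equation*}
U'(t) = u(t,0) - \mu U(t) = \alpha f\!\left(\int_0^\infty \beta(a)u(t,a)\,\dd a\right) - \mu U(t).
\end{equation*}

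Next I would bound the birth term from above. By the definition of $\Lambda$ we have $\alpha f(x)\le \mu\Lambda$ for every $x\ge 0$ (since $\Lambda=\tfrac{\alpha}{\mu}\sup_{x\ge0}f(x)$), and the argument $\int_0^\infty \beta(a)u(t,a)\,\dd a$ is nonnegative, so the birth term is at most $\mu\Lambda$ regardless of its precise value. This yields the differential inequality
\begin{equation*}
U'(t) \le \mu\Lambda - \mu U(t), \qquad t\ge 0.
\end{equation*}
I would then integrate this linear inequality: multiplying by the integrating factor $e^{\mu t}$ gives $\frac{\dd}{\dd t}\big(e^{\mu t}U(t)\big)\le \mu\Lambda\, e^{\mu t}$, and integrating from $0$ to $t$ produces $e^{\mu t}U(t)-U_0\le \Lambda(e^{\mu t}-1)$, which rearranges to exactly $U(t)\le e^{-\mu t}U_0+(1-e^{-\mu t})\Lambda$, the claimed bound.

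The only genuine obstacle is the rigorous justification of the formal manipulations, namely differentiating under the integral sign and discarding the boundary term at $a=+\infty$; for an abstract mild/integrated solution in $L^1$ this is not automatic. I would handle this by working directly from the characteristics formula and the Volterra equation \eqref{1.2} rather than from the PDE in its classical form, or equivalently by first establishing the identity for smooth approximating initial data and passing to the limit by density of $L^1\cap C_c$ in $L^1_+$. Since $f$ is bounded (indeed $\sup_{x\ge0}f(x)=f(1)=e^{-1}<\infty$, so $\Lambda$ is finite), all the quantities involved are well controlled and the comparison step itself is elementary once the differential inequality is in hand.
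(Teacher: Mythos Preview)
Your argument is correct. The paper actually states Lemma~\ref{LE2.1} without proof, so there is nothing to compare against; what you wrote is precisely the standard derivation one would expect. One small remark: the cleanest way to make the argument fully rigorous---which you already hint at---is to bypass the differential inequality entirely and work directly from the characteristics representation. Since $b(s)=\alpha f(\cdot)\le \alpha\sup_{x\ge 0}f(x)=\mu\Lambda$ for all $s\ge 0$, one has
\[
U(t)=\int_0^t e^{-\mu a}b(t-a)\,\dd a+e^{-\mu t}\int_0^\infty u_0(\sigma)\,\dd\sigma
\le \mu\Lambda\int_0^t e^{-\mu a}\,\dd a+e^{-\mu t}U_0,
\]
which gives the bound immediately, with no differentiation under the integral and no boundary term at $a=\infty$ to justify.
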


\section{Invariant subregion}
\label{section3}
In this section, we use  the recent approach presented in Magal, Seydi and Wang \cite{Magal-Seydi-Wang}. The following comparison result can be obtained by comparison of the Volterra integral equations.
\begin{theorem}\label{TH3.1}
Assume that $u_{\pm}(t,a)$ satisfies
\begin{equation} \label{3.1}
\left\lbrace
\begin{array}{l}
\partial_t u_{\pm}(t,a)+\partial_a u_{\pm}(t,a)=-\mu u_{\pm}(t,a), \text{ for } a \geq 0,\ t\geq0,\\
u_{\pm}(t,0)=\alpha \delta_\pm \int_0^{+\infty} \beta(a)u_{\pm}(t,a)\dd a, \\
u_{\pm}(0,.)=u_0 \in \Li^1_+(\left(0,+\infty \right), \mathbb{R}).
\end{array}
\right.
\end{equation}
Then
\begin{equation*} 
u_{-}(t,a)\leq u(t,a) \leq u_{+}(t,a),\ \mbox{for}\ t\geq0, \ \mbox{for a. e.}\ a\geq 0,
\end{equation*}
where
$
\delta_-=e^{-\Vert \beta \Vert_{\Li^\infty} \max(U_0, \Lambda)} <1 = \delta_+.
$
\end{theorem}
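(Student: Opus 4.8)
The plan is to reduce the comparison to the scalar renewal (Volterra) equations solved by the boundary values $b(t)=u(t,0)$ and $b_\pm(t)=u_\pm(t,0)$, and to exploit two elementary inequalities satisfied by $f(x)=xe^{-x}$. Integrating all three problems along the characteristics exactly as in \eqref{1.2}, and writing the common forcing term $g(t)=\int_t^\infty \beta(a)e^{-\mu t}u_0(a-t)\,\dd a$ together with the nonnegative kernel $K(a)=\beta(a)e^{-\mu a}$, the boundary values satisfy
$$
b(t)=\alpha f\big(g(t)+(K*b)(t)\big),\qquad b_\pm(t)=\alpha\,\delta_\pm\,\big(g(t)+(K*b_\pm)(t)\big),
$$
where $(K*\phi)(t)=\int_0^t K(a)\phi(t-a)\,\dd a$ and, for the nonlinear solution, $g(t)+(K*b)(t)=\int_0^\infty \beta(a)u(t,a)\,\dd a=:x(t)$. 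The existence of $b_\pm$ as continuous solutions of these linear renewal equations is standard, so I take the $u_\pm$ as given and focus on the ordering.

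First I would record the two bounds on $f$. Since $e^{-x}\le 1$ for $x\ge 0$, we have $f(x)=xe^{-x}\le x$, which handles the upper comparison with $\delta_+=1$ and needs no a priori estimate. For the lower bound I would invoke Lemma \ref{LE2.1}: it gives $U(t)\le \max(U_0,\Lambda)$, hence $x(t)\le \Vert\beta\Vert_{\Li^\infty}U(t)\le \Vert\beta\Vert_{\Li^\infty}\max(U_0,\Lambda)$ for all $t\ge 0$. On this range $e^{-x}\ge \delta_-$, so $f(x)=xe^{-x}\ge \delta_- x$ with precisely $\delta_-=e^{-\Vert\beta\Vert_{\Li^\infty}\max(U_0,\Lambda)}$. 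Consequently $b$ is a subsolution of the $b_+$-equation and a supersolution of the $b_-$-equation:
$$
b(t)\le \alpha\delta_+\big(g(t)+(K*b)(t)\big),\qquad b(t)\ge \alpha\delta_-\big(g(t)+(K*b)(t)\big).
$$

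Next I would run the comparison at the level of these linear Volterra equations. Setting $w=b_+-b$ and $v=b-b_-$ and subtracting, the $g$-terms cancel and one is left with
$$
w(t)\ge \alpha\delta_+(K*w)(t),\qquad v(t)\ge \alpha\delta_-(K*v)(t),
$$
each of the form $z(t)\ge c\,(K*z)(t)$ with $c>0$ and $K\ge 0$. Writing $\psi:=z-c(K*z)\ge 0$, the resolvent $R=\sum_{n\ge 1}(cK)^{*n}$ of $cK$ is nonnegative (all convolution powers of the nonnegative kernel $K$ are nonnegative, and the series converges locally uniformly since $\int_0^\infty K=1$), so $z=\psi+R*\psi\ge 0$. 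Hence $w,v\ge 0$, i.e. $b_-\le b\le b_+$ on $[0,\infty)$.

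Finally I would transfer the ordering back to the densities through the characteristic formula: for $a\ge t$ all three solutions coincide with $e^{-\mu t}u_0(a-t)$, while for $0\le a\le t$ one has $u(t,a)=e^{-\mu a}b(t-a)$ and $u_\pm(t,a)=e^{-\mu a}b_\pm(t-a)$, so $b_-\le b\le b_+$ propagates to $u_-\le u\le u_+$ for a.e. $a\ge 0$ and all $t\ge 0$. The only genuinely delicate point is the Volterra comparison step, which the resolvent-positivity argument makes routine; the one thing to be careful about is that the a priori bound $x(t)\le \Vert\beta\Vert_{\Li^\infty}\max(U_0,\Lambda)$ supplied by Lemma \ref{LE2.1} holds uniformly in $t$, since this is exactly what pins down the value of $\delta_-$.
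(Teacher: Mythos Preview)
Your proposal is correct and follows the same route as the paper's proof: both use Lemma~\ref{LE2.1} to obtain $\alpha\delta_-\,x(t)\le b(t)\le \alpha\,x(t)$ with $x(t)=\int_0^\infty\beta(a)u(t,a)\,\dd a$, and then compare at the Volterra level. The only difference is that you supply the resolvent-positivity argument explicitly, whereas the paper defers this step to the comparison result of \cite{Magal-Seydi-Wang}.
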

\begin{proof}
We just need to observe that
$$
b(t)=\alpha \int_0^{+\infty} \beta(a)u(t,a)\dd a \exp\left( -\int_0^{+\infty} \beta(a)u(t,a)\dd a\right) {.}
$$
Therefore by using Lemma \ref{LE2.1}, we have
\begin{equation*}
\alpha \int_0^{+\infty} \beta(a)u(t,a)\dd a e^{-\Vert \beta \Vert_{\Li^\infty} \max(U_0, \Lambda)}  \leq b(t) \leq \alpha \int_0^{+\infty} \beta(a)u(t,a)\dd a.
\end{equation*}
 Then the result follows by using the comparison result in \cite{Magal-Seydi-Wang}.
\end{proof}

Let
$$
\Gamma^{\pm}(a)=\alpha \delta_\pm \int_{a}^{\infty}e^{-\int_a^\theta[\mu+\lambda^{\pm}]\dd \sigma}\beta(\theta)\dd \theta,
$$ where $\lambda^{\pm}$ is chosen to satisfy that $\Gamma^{\pm}(0)=1$. That is to say that $\lambda^{\pm} \in \mathbb{R}$ satisfies
$$
\alpha  \delta_\pm \int_{0}^{\infty}e^{-\int_0^\theta[\mu+\lambda^{\pm}]\dd \sigma}\beta(\theta)\dd \theta=1.
$$
\begin{remark}  \label{REM3.2}
$\lambda^{\pm} \in \mathbb{R}$ is the dominant eigenvalue of  the linear operator associated to the system \eqref{3.1}. The above integral equation corresponds to the characteristic equation of this linear operator.
\end{remark}
The following lemma is showing that $\Gamma^{\pm}(a)$ is an adjoint eigenfunction associated the eigenvalues $\lambda^{\pm}$.
\begin{lemma} \label{LE3.3}
For each $t \geq 0$,
\begin{equation*}
\int_0^{\infty}\Gamma^{\pm}(a)u_{\pm}(t,a) \dd a=e^{\lambda^{\pm}t}\int_0^{\infty}\Gamma^{\pm}(a)u_0(a) \dd a.
\end{equation*}
\end{lemma}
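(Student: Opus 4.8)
The plan is to show that $\Gamma^\pm$ is a genuine adjoint eigenfunction and then to differentiate the pairing $V(t):=\int_0^\infty \Gamma^\pm(a)u_\pm(t,a)\,\dd a$ in time, reducing the identity to the scalar linear ODE $V'(t)=\lambda^\pm V(t)$.

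First I would record the differential identity satisfied by $\Gamma^\pm$. Since $\mu$ and $\lambda^\pm$ are constants, $\int_a^\theta[\mu+\lambda^\pm]\dd\sigma=(\mu+\lambda^\pm)(\theta-a)$, so
$$
\Gamma^\pm(a)=\alpha\delta_\pm\, e^{(\mu+\lambda^\pm)a}\int_a^\infty e^{-(\mu+\lambda^\pm)\theta}\beta(\theta)\,\dd\theta,
$$
and a direct differentiation gives
$$
(\Gamma^\pm)'(a)=(\mu+\lambda^\pm)\Gamma^\pm(a)-\alpha\delta_\pm\,\beta(a),
$$
with boundary values $\Gamma^\pm(0)=1$ (the normalisation defining $\lambda^\pm$) and $\Gamma^\pm(a)\to0$ as $a\to\infty$ (which holds since $\mu+\lambda^\pm>0$, as one checks from the characteristic equation in Remark \ref{REM3.2}). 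This is the adjoint relation announced before the statement.

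Next I would differentiate $V(t)$ and substitute the PDE $\partial_t u_\pm=-\partial_a u_\pm-\mu u_\pm$ from \eqref{3.1}:
$$
V'(t)=\int_0^\infty \Gamma^\pm(a)\bigl(-\partial_a u_\pm(t,a)-\mu u_\pm(t,a)\bigr)\,\dd a.
$$
Integrating the $\partial_a$ term by parts, the boundary contribution at $a=\infty$ vanishes while at $a=0$ it produces $\Gamma^\pm(0)\,u_\pm(t,0)=u_\pm(t,0)$; inserting the ODE for $(\Gamma^\pm)'$ then gives
$$
V'(t)=u_\pm(t,0)+(\mu+\lambda^\pm)V(t)-\alpha\delta_\pm\int_0^\infty\beta(a)u_\pm(t,a)\,\dd a-\mu V(t).
$$
The nonlocal boundary condition in \eqref{3.1} states exactly that $\alpha\delta_\pm\int_0^\infty\beta(a)u_\pm(t,a)\,\dd a=u_\pm(t,0)$, so these two terms cancel and I am left with $V'(t)=\lambda^\pm V(t)$. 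Integrating this scalar linear ODE yields $V(t)=e^{\lambda^\pm t}V(0)=e^{\lambda^\pm t}\int_0^\infty\Gamma^\pm(a)u_0(a)\,\dd a$, which is the claim.

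The hard part will be justifying the time-differentiation under the integral sign and the integration by parts, since $u_\pm(t,\cdot)$ is only an $\Li^1$ solution obtained along characteristics, so that $\partial_a u_\pm$ need not exist classically and the pointwise decay of $\Gamma^\pm(a)u_\pm(t,a)$ at $a=\infty$ cannot be read off directly. I would handle this by first proving the identity for smooth, compactly supported initial data $u_0$, where all the manipulations above are classical, and then extending to general $u_0\in\Li^1_+$ by density together with the continuous dependence of $u_\pm$ on $u_0$ in $\Li^1$ (equivalently, the continuous dependence of the birth term through the Volterra equation \eqref{1.2}); since $\Gamma^\pm$ is bounded, the pairing $V(t)$ is continuous in $u_0$ and the limit passes through. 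Alternatively, one can bypass the regularity issue altogether by inserting the explicit characteristic representation of $u_\pm$ into $V(t)$ and verifying $V'(t)=\lambda^\pm V(t)$ directly from the renewal/Volterra equation for the birth rate $b_\pm$.
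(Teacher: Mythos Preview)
Your proposal is correct and follows essentially the same route as the paper: derive the adjoint ODE $(\Gamma^\pm)'=(\mu+\lambda^\pm)\Gamma^\pm-\alpha\delta_\pm\beta$ with $\Gamma^\pm(0)=1$, differentiate the pairing, integrate by parts, use the boundary condition to cancel terms, obtain $V'=\lambda^\pm V$, and then pass from classical to general $\Li^1$ data by density. Your treatment is in fact slightly more careful than the paper's, since you explicitly address the vanishing of the boundary term at $a=\infty$ via $\mu+\lambda^\pm>0$, whereas the paper leaves this implicit.
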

\begin{proof}
The function $ a \to \Gamma^{\pm}(a)$ satisfies
\begin{equation*}
\left\lbrace
\begin{array}{ll}
(\Gamma^{\pm})'(a)=[\mu+\lambda^{\pm}]\Gamma^{\pm}(a)-\alpha  \delta_\pm \beta(a),\ \mbox{for a. e.}\ a\geq 0,\\
\Gamma^{\pm}(0)=1.
\end{array}
\right.
\end{equation*}
By using classical solutions of \eqref{3.1}, we deduce that
$$
 \frac{\dd}{\dd t}\int_0^{\infty}\Gamma^{\pm}(a) u_{\pm}(t,a) \dd a= \int_0^{\infty}\Gamma^{\pm}(a)[-\partial_a u_{\pm}(t,a)-\mu u_{\pm}(t,a)] \dd a.
$$
{Using} integration by parts, it follows that
\begin{eqnarray*}
 \frac{\dd}{\dd t}\int_0^{\infty}\Gamma^{\pm}(a)u_{\pm}(t,a) \dd a
&=&\Gamma^{\pm}(0) u_{\pm}(t,0)+\int_0^{\infty}(\Gamma^{\pm})'(a)u_{\pm}(t,a) \dd a\\
& & - \int_0^{\infty}\Gamma^{\pm}(a)\mu u_{\pm}(t,a) \dd a.
\end{eqnarray*}
Using the facts $\Gamma^{\pm}(0)=1$ and $u_{\pm}(t,0)=\alpha \delta_{\pm}\int_0^{+\infty} \beta(a)u_{\pm}(t,a)\dd a$, we have
\begin{equation*}
\frac{\dd}{\dd t}\int_0^{\infty}\Gamma^{\pm}(a)u_{\pm}(t,a) \dd a=\lambda^{\pm}\int_0^{\infty}\Gamma^{\pm}(a)u_{\pm}(t,a) \dd a,
\end{equation*}
{and} the result follows {from} the fact that the set of initial values giving a classical solution is dense in $L^1$.
\end{proof}

By Assumption \ref{ASS1.1} we have $\beta \neq 0$. Therefore we can define
$$
a^\star:=\sup \left\{a>0: \int_a^{\infty}\beta(\sigma) e^{- \mu \sigma} \dd\sigma >0  \right\} \in (0, \infty].
$$
\begin{remark} \label{REM3.4}
In practice, the number $a^\star$ corresponds to the maximal value at which individuals can reproduce.

Assume for simplicity that $a \to \beta(a)$ is a continuous function. Then, $a^\star=\infty$ if and only if for each $a \geq 0$, there exists $\widehat{a}>a$ such that
$
\beta(\widehat{a})  >0.
$
If $a^\star<\infty$, then
$$
\beta(a)=0, \forall a \geq a^\star.
$$
\end{remark}
Define the interior sub-domain
$$
\widehat{M}_0=\left \{u \in \Li^1_+(0,+\infty): \int_0^{a^\star} u(a)\dd a>0 \right\},
$$
and the boundary sub-domain
$$
\partial \widehat{M}_0=\left \{u \in \Li^1_+(0,+\infty): \int_0^{a^\star} u(a)\dd a = 0 \right\}.
$$
Actually, the boundary domain $\partial \widehat{M}_0$ corresponds to a case where the distribution $u$ contains only individuals that will not reproduce. However, some individuals in the interior region will produce  {newborn} individuals. Due to the irreducible structured of the semiflow generated by \eqref{1.1}, we can obtain the invariance of $\partial \widehat{M}_0$ and $\widehat{M}_0$.

\begin{theorem}\label{TH3.5}
Let Assumption \ref{ASS1.1} be satisfied. The domains $\left[ 0,\infty \right) \times \widehat{M}_0 $ and  $\left[ 0,\infty \right) \times  \partial \widehat{M}_0 $  are positively invariant by the semiflow generated by \eqref{1.1}. {That is} to say that
$$
\int_0^{a^\star} u_0(a)\dd a >0 \Rightarrow \int_0^{a^\star} u(t,a)\dd a > 0, \forall t \geq 0 ,
$$
and
$$
\int_0^{a^\star} u_0(a)\dd a=0 \Rightarrow \int_0^{a^\star} u(t,a)\dd a=0, \forall t \geq 0.
$$
Moreover if $\int_0^{a^\star} u_0(a)\dd a=0$, then
$$
\int_0^{\infty} \beta(a)u(t,a)\dd a=0, \forall t \geq 0 ,
$$
and the solution is explicitly given by
\begin{equation*}
u(t,a)=
\left\lbrace
\begin{array}{l}
e^{-\mu t} u_0(a-t), \text{{\rm if }} a-t \geq 0,\\
0, \text{{\rm if }} t-a \geq 0.
\end{array}
\right.
\end{equation*}
Therefore,
$
\lim_{t \to \infty} \Vert u(t,.) \Vert_{\Li^1}=0,
$
and the convergence is exponential.
\end{theorem}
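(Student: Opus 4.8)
The plan is to split the argument according to the two sub-regions, handling the boundary case $\int_0^{a^\star} u_0(a)\dd a = 0$ first, since it also yields the explicit formula, the vanishing of $\int_0^\infty \beta u$, and the exponential decay, and then treating the interior case $\int_0^{a^\star} u_0(a)\dd a > 0$ by comparison with the linear minorant.

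For the boundary case, the hypothesis $\int_0^{a^\star} u_0 = 0$ means $u_0 = 0$ a.e. on $(0,a^\star)$, i.e. $u_0$ is supported in $[a^\star,\infty)$. I would write the argument of $f$ in the Volterra equation \eqref{1.2} as $g(t) + \int_0^t \beta(a) e^{-\mu a} b(t-a)\dd a$, where $g(t) = \int_t^\infty \beta(a) e^{-\mu t} u_0(a-t)\dd a = e^{-\mu t}\int_0^\infty \beta(s+t) u_0(s)\dd s$, and first show that $g \equiv 0$: since $u_0(s)=0$ for a.e. $s<a^\star$ while, by the definition of $a^\star$ (see Remark \ref{REM3.4}), $\beta$ vanishes a.e. on $(a^\star,\infty)$, the product $\beta(s+t)u_0(s)$ is zero for a.e. $s$ and every $t\geq 0$. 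Equation \eqref{1.2} then reduces to $b(t) = \alpha f\left(\int_0^t \beta(a) e^{-\mu a} b(t-a)\dd a\right)$, which, because $f(0)=0$, admits $b\equiv 0$ as a solution; by the uniqueness of the continuous solution of \eqref{1.2} we get $b\equiv 0$. Substituting $b\equiv 0$ into the representation of $u$ along the characteristics gives exactly the stated formula $u(t,a)=e^{-\mu t}u_0(a-t)$ for $a\geq t$ and $u(t,a)=0$ for $a<t$. From this I read off $\int_0^\infty \beta(a) u(t,a)\dd a = g(t)=0$, the boundary invariance $\int_0^{a^\star} u(t,a)\dd a = e^{-\mu t}\int_0^{\max(a^\star-t,0)} u_0(s)\dd s = 0$, and $\Vert u(t,\cdot)\Vert_{\Li^1} = e^{-\mu t} U_0$, which is the announced exponential decay.

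For the interior case I would use the minorant $u_-$ of Theorem \ref{TH3.1} together with the adjoint identity of Lemma \ref{LE3.3}. The key point is the support of the adjoint eigenfunction: since the weight $e^{-\int_a^\theta[\mu+\lambda^-]\dd\sigma}$ is strictly positive, $\Gamma^-(a)>0$ if and only if $\int_a^\infty \beta(\theta)e^{-\mu\theta}\dd\theta>0$, that is if and only if $a<a^\star$, so $\Gamma^-$ is positive on $(0,a^\star)$ and vanishes on $[a^\star,\infty)$. Hence $\int_0^\infty \Gamma^-(a) u_0(a)\dd a = \int_0^{a^\star}\Gamma^-(a) u_0(a)\dd a>0$ whenever $\int_0^{a^\star} u_0>0$ (as $u_0\geq 0$ is not a.e. zero there), and Lemma \ref{LE3.3} gives, for every $t\geq 0$,
$$\int_0^{a^\star}\Gamma^-(a) u_-(t,a)\dd a = \int_0^\infty \Gamma^-(a) u_-(t,a)\dd a = e^{\lambda^- t}\int_0^{a^\star}\Gamma^-(a) u_0(a)\dd a > 0.$$
Because $\Gamma^->0$ a.e. on $(0,a^\star)$ and $u_-(t,\cdot)\geq 0$, this forces $u_-(t,\cdot)$ not to vanish a.e. on $(0,a^\star)$, i.e. $\int_0^{a^\star} u_-(t,a)\dd a>0$. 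Finally the comparison $u_-(t,a)\leq u(t,a)$ of Theorem \ref{TH3.1} yields $\int_0^{a^\star} u(t,a)\dd a \geq \int_0^{a^\star} u_-(t,a)\dd a>0$ for all $t\geq 0$, which is the interior invariance.

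The computations along the characteristics and the final decay estimate are routine; the step that requires the most care is the support analysis of $\Gamma^-$, namely the equivalence $\Gamma^-(a)>0 \Leftrightarrow a<a^\star$ and the resulting strict positivity of $\int_0^{a^\star}\Gamma^- u_0$. This is precisely where the irreducibility of the birth kernel enters: $\beta$ does not vanish identically on any left-neighbourhood of $a^\star$, so the minorant $u_-$ genuinely "sees" the whole reproductive window $(0,a^\star)$. I would also verify at the outset that $\lambda^-\in\R$ and $\delta_-\in(0,1)$ are well defined, as provided by Remark \ref{REM3.2} and Theorem \ref{TH3.1}, and that the integral defining $\Gamma^-$ converges (equivalently $\mu+\lambda^->0$), which is what makes $\Gamma^-$ finite and the above manipulations legitimate.
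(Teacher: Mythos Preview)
Your argument is correct. For the interior case your route is exactly the paper's: compare with the linear minorant $u_-$, use the adjoint eigenfunction $\Gamma^-$ via Lemma~\ref{LE3.3}, and observe that the support of $\Gamma^-$ is precisely $(0,a^\star)$ so that $\int_0^\infty \Gamma^- u_0>0\Leftrightarrow u_0\in\widehat M_0$.

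For the boundary case you diverge from the paper. The paper argues symmetrically, using the \emph{majorant} $u_+$ and the adjoint function $\Gamma^+$: if $u_0\in\partial\widehat M_0$ then $\int_0^\infty \Gamma^+u_0=0$, so by Lemma~\ref{LE3.3} and $u\leq u_+$ one gets $\int_0^\infty\Gamma^+u(t,\cdot)=0$, hence $u(t,\cdot)\in\partial\widehat M_0$. You instead go directly through the Volterra equation~\eqref{1.2}: the support hypotheses on $u_0$ and $\beta$ kill the inhomogeneous term $g(t)$, the reduced equation has $b\equiv 0$ as a solution, and uniqueness finishes it. Your route is more elementary (no comparison theorem, no $\Gamma^+$) and has the advantage of producing the explicit formula, the identity $\int_0^\infty\beta u(t,\cdot)=0$, and the exponential decay $\Vert u(t,\cdot)\Vert_{\Li^1}=e^{-\mu t}U_0$ in one stroke; the paper's proof establishes only the invariance of $\partial\widehat M_0$ and leaves those additional statements of the theorem implicit. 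The paper's approach, on the other hand, is pleasantly uniform: the same adjoint-eigenfunction mechanism handles both sub-regions.

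One small remark on your closing paragraph: the finiteness of $\Gamma^-$ does not actually require $\mu+\lambda^->0$. When $a^\star<\infty$ the defining integral has compact range; when $a^\star=\infty$ the interior case is the only nontrivial one and the characteristic equation together with $\int_0^\infty\beta(a)e^{-\mu a}\dd a=1$ already controls the tail. So that verification is lighter than you suggest.
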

\begin{proof}Assume first $u_0 \in \partial \widehat{M}_0$. Then,
$
\int_0^{\infty}\Gamma^{+}(a)u_0(a) \dd a=0.
$
Moreover,
$$
\int_0^{\infty}\Gamma^{+}(a)u(t,a) \dd a \leq \int_0^{\infty}\Gamma^{+}(a)  u_+(t,a) \dd a =e^{\lambda^{+}t}\int_0^{\infty}\Gamma^{+}(a)u_0(a) \dd a=0  {.}
$$
Therefore,
$
\int_0^{\infty}\Gamma^{+}(a)u(t,a) \dd a =0, \forall t \geq 0.
$
Hence,
$$
u(t,.) \in \partial \widehat{M}_0, \forall t \geq 0.
$$

Assume next that $u_0 \in  \widehat{M}_0$. Then it follows that
$
\int_0^{\infty}\Gamma^{-}(a)u_0(a) \dd a>0,
$
and
$$
\int_0^{\infty}\Gamma^{-}(a)u(t,a) \dd a \geq \int_0^{\infty}\Gamma^{-}(a) u_- (t,a) \dd a =e^{\lambda^{-}t}\int_0^{\infty}\Gamma^{-}(a)u_0(a) \dd a>0.
$$
Thus,
 $$
u(t,.) \in \widehat{M}_0, \forall t \geq 0.
$$
\end{proof}

Let $u_0 \in   \widehat{M}_0$. Since the function $a \to \beta(a)$ is not assumed to be strictly positive on the interval $[0,a^*)$, the previous theorem
does not imply that
$$
b(t)=\int_0^{\infty} \beta(a)u(t,a)\dd a>0, \forall t \geq 0.
$$
By using the previous assumption, we should consider two different {cases}: 1) $a^*=\infty$, then
$$
\int_a^\infty \beta(a)e^{-\mu a}\dd a >0, \forall a >0;
$$
and 2)  $a^*<\infty$, then
$$
\int_a^{a^*} \beta(a)e^{-\mu a}\dd a >0, \forall a \in (0,a^*).
$$

\begin{proposition}\label{PRO3.6}
 Let Assumption \ref{ASS1.1} be satisfied. For each $u_0 \in \widehat{M}_0$, there exists $t_0=t_0(u_0)>0$ such that
$
b(t)>0, \forall t \geq t_0.
$
\end{proposition}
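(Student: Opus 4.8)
First I would reduce the statement to a positivity property of the birth term alone. Since $f(x)=xe^{-x}$ vanishes only at $x=0$, for any admissible solution $b(t)>0$ is equivalent to $B(t):=\int_0^\infty \beta(a)u(t,a)\,\dd a>0$. By Lemma \ref{LE2.1} we have $B(t)\le \Vert\beta\Vert_{\Li^\infty}\max(U_0,\Lambda)=:M$ for all $t\ge 0$, and on $[0,M]$ the elementary bound $xe^{-x}\ge e^{-M}x$ holds; hence $b(t)=\alpha f(B(t))\ge \gamma B(t)$ with $\gamma:=\alpha e^{-M}>0$ (and $b\le \alpha B$, so the two are positive together). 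Writing the solution along the characteristics and splitting the integral at age $a=t$ gives the renewal identity
$$B(t)=g(t)+\int_0^t K(a)\,b(t-a)\,\dd a,\qquad K(a):=\beta(a)e^{-\mu a},\quad g(t):=e^{-\mu t}\int_0^\infty \beta(s+t)u_0(s)\,\dd s\ge 0,$$
from which, using $b\ge 0$, I obtain the renewal inequality $b(t)\ge \gamma\int_0^t K(a)\,b(t-a)\,\dd a$.

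The second step produces a first instant of positivity. Rather than seeking a single $t$ with $g(t)>0$ (which may fail when $\beta$ has gaps), I would integrate the forcing and use Fubini:
$$\int_0^\infty g(t)\,\dd t=\int_0^\infty u_0(s)\,e^{\mu s}\Big(\int_s^\infty e^{-\mu a}\beta(a)\,\dd a\Big)\,\dd s.$$
The inner integral is strictly positive precisely when $s<a^\star$ and vanishes for $s\ge a^\star$, while $u_0\in\widehat M_0$ forces $u_0>0$ on a set of positive measure inside $[0,a^\star)$; this makes the right-hand side strictly positive in both cases $a^\star=\infty$ and $a^\star<\infty$. Hence $g\not\equiv 0$, so there is some $t_1>0$ with $g(t_1)>0$, giving $B(t_1)\ge g(t_1)>0$ and $b(t_1)>0$. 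Since $b$ is the continuous solution of \eqref{1.2}, we conclude $b>0$ on an open interval $(t_1-\eta,t_1+\eta)$ with $0<\eta<t_1$.

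The third step is the propagation. Iterating the renewal inequality and using $b\ge 0$ gives $b(t)\ge \gamma^{\,n}\,(K^{*n}*b)(t)$ for every $n$, where $K^{*n}$ denotes the $n$-fold convolution. Consequently $b(t)>0$ whenever the set $\{K^{*n}>0\}$ meets the shifted window $(t-t_1-\eta,\,t-t_1+\eta)$ in positive measure for some $n$. Now $\mathrm{supp}\,\beta$ has positive Lebesgue measure, so the Steinhaus theorem yields an interval $(p,q)$ on which $K*K>0$; the $k$-fold sums $(kp,kq)$ have width $k(q-p)\to\infty$, hence consecutive ones eventually overlap and $\bigcup_k (kp,kq)\supseteq[c,\infty)$ for some $c>0$. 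Therefore $\bigcup_n\{K^{*n}>0\}$ contains a half-line, and taking $t_0:=c+t_1+\eta$ makes every window $(t-t_1-\eta,t-t_1+\eta)$ with $t\ge t_0$ a nonempty open interval inside that half-line, which must meet some $\{K^{*2k}>0\}$ in positive measure. This gives $b(t)>0$ for all $t\ge t_0$.

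I expect the third step to be the main obstacle: converting the isolated, possibly gap-separated positivity of $b$ into positivity on an entire half-line. The difficulty is exactly that $\beta$ may vanish on large sets, so one reproduction event need not sustain $b>0$; the resolution is that finitely many successive reproduction events, encoded by the growing supports of $K^{*n}$, cover every sufficiently large delay. The case distinction prepared before the statement enters only through the location of $\mathrm{supp}\,\beta$ (unbounded when $a^\star=\infty$, accumulating at $a^\star$ when $a^\star<\infty$); in either situation the Steinhaus-plus-sumset argument applies verbatim once $K*K$ is positive on one interval.
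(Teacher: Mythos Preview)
Your proof is correct and follows the same three-stage architecture as the paper (reduce to a linear renewal inequality, locate a first instant of positivity via Fubini, then propagate), but the execution differs in two respects.

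First, where the paper invokes the PDE comparison with the linear problem $u_-$ from Theorem~\ref{TH3.1} to obtain the linear renewal equation for $B_-$, you obtain the same effective inequality directly from the pointwise bound $f(x)\ge e^{-M}x$ on $[0,M]$; your constant $\gamma=\alpha e^{-M}$ is exactly the paper's $\alpha\delta_-$, so the reductions coincide. Second, and more substantively, your propagation argument is different: the paper integrates the Volterra equation over windows of length $\Delta t$ and asserts, by an induction sketched in one sentence, that the support of $B_-$ grows by increments of $a^\star$ until it fills a full interval. You instead exploit that $K\in L^1\cap L^\infty$ makes $K*K$ continuous (so Steinhaus is not even needed: continuity plus $\int K*K=1$ already gives an interval $(p,q)$ of positivity), and then the sumset intervals $(kp,kq)$ eventually cover a half-line. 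Your route is more self-contained and handles the cases $a^\star=\infty$ and $a^\star<\infty$ uniformly, whereas the paper's phrasing tacitly leans on the finite-$a^\star$ picture; the paper's route stays closer to the problem's age-structure intuition.
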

\begin{proof} Due to the fact that $u(t,.) \geq u_-(t,.), \forall t \geq 0$, it is sufficient to consider that $t \to B_{-}(t):=\int_0^{+\infty} \beta(a)u_{-}(t,a)\dd a$ is the unique solution of the Volterra integral equation
$$
B_{-}(t)=F_{-}(t)+\int_0^t \gamma(a) B_{-}(t-a) \dd a,
$$
where
$
\gamma(a):=\beta(a)e^{-\mu a}
$
and
\begin{equation*}
F_{-}(t)=e^{-\mu t}  \int_t^\infty \beta(a) u_{0}(a-t) \dd a=e^{-\mu t}  \int_0^\infty \beta(\sigma+t) u_{0}(\sigma ) \dd \sigma.
\end{equation*}
By using the continuity of the shift operator in $L^1$, we deduce that the map
$$
t \to \int_t^\infty \beta(a) u_{0}(a-t) \dd a
$$
is continuous. Therefore, the map $t \to F_{-}(t)$ is continuous.

\noindent  \textit{First part of the proof:} Let $\Delta t>0$. By using Fubini's theorem, we have
$$
\int_t^{t+\Delta t}\int_0^\infty \beta(\sigma+\theta) u_{0}(\sigma ) \dd \sigma \dd \theta=\int_0^\infty  \int_t^{t+\Delta t} \beta(\sigma+\theta)  \dd \theta  u_{0}(\sigma )\dd \sigma .
$$
Then, by using the definition of $a^*$, we deduce that there exists $t^* \in (0,a^*)$ such that
\begin{equation*}
\int_0^\infty \beta(\sigma+t^*) u_{0}(\sigma ) \dd \sigma >0 \Leftrightarrow F_{-}(t^*)>0.
\end{equation*}

\noindent \textit{Second part of the proof: } In order to prove this theorem, we observe that for all $t \geq 0$
$$
\int_t^{t+\Delta t} B_{-}(\theta) \dd \theta =\int_t^{t+\Delta t} F_{-}(\theta) \dd \theta + \int_t^{t+\Delta t} \int_0^t \gamma(\theta-a) B_{-}(a) \dd a \dd \theta.
$$
 Then, by using Fubini's theorem, we obtain for all $t \geq 0$
$$
\int_t^{t+\Delta t} B_{-}(l) \dd l =\int_t^{t+\Delta t} F_{-}(l) \dd l +  \int_0^t \int_{t-a}^{t-a+\Delta t} \gamma(\theta) \dd \theta  B_{-}(a)  \dd a.
$$
Now, you can work like if $\gamma(q)$ were a continuous function, and we deduce that the length of the support $t \to B_{-}(t)$ increases from $(0,a^*)$ to $(a^*,2a^*)$. By induction, we deduce that there exists an integer $m\geq 1$ such that $t \to B_{-}(t)$ is strictly positive on $(ma^*,(m+1)a^*)$. The result follows.
\end{proof}

\section{Asymptotic smoothness of the semiflow}
\label{section4}
In this section, we prove that the semiflow generated by \eqref{1.1} is asymptotically smooth. Define
\begin{equation*}
  u_1(t, a) =
            \left\{
                  \begin{array}{cc}
                     e^{-\mu t} u_0(a - t),  & \text{ if }  a \geq t,\\
                    e^{-\mu a} b_1(t -a), &  \text{ if }   a \leq  t,
                  \end{array}
            \right.
\end{equation*}
and
\begin{equation*}
  u_2(t, a) =
            \left\{
                  \begin{array}{cc}
                    0,  & \text{ if }  a \geq t,\\
                    e^{-\mu a} b_2(t -a), &  \text{ if }   a \leq  t,
                  \end{array}
            \right.
\end{equation*}
where
\begin{equation*} 
\begin{array}{l}
b_1(t)=\alpha \left[ f \left(B(t) \right)-f(B_2(t)) \right], \quad
b_2(t)= \alpha f(B_2(t)),
\end{array}
\end{equation*}
and
\begin{equation*}  
B(t)=\int_0^\infty u(t,a)\dd a=\underset{B_1(t)}{\underbrace{e^{-\mu t} \int_t^\infty \beta(a)u_0(a-t)\dd a}}+ \underset{B_2(t)}{\underbrace{ \int_0^t \beta(a) e^{-\mu a} b(t-a)\dd a}}.
\end{equation*}

\begin{theorem}
The semiflow generated by \eqref{1.1} is asymptotically smooth. Moreover, precisely the semiflow $U(t)$ generated by \eqref{1.1} on $L^1_+$  can be decomposed into
$$
U(t)u_0=C(t)u_0+V(t)u_0, \forall t \geq 0, \forall u_0 \in L^1_+,
$$
where $C(t):L^1_+ \to L^1$ and $V(t):L^1_+ \to L^1$  are defined by
$$
C(t)u_0=u_2(t,.), \text{ and } V(t)u_0=u_1(t,.),  \forall t \geq 0, \forall u_0 \in L^1_+.
$$
Then we have the following properties
\begin{itemize}
\item[{\rm (i)}]  The nonlinear operator $C(t):L^1_+ \to L^1$ is completely continuous for each $t \geq 0$;
\item[{\rm (ii)}] There exists a constant $\chi>0$ such that
$
\Vert V(t)u_0 \Vert \leq \chi e^{-\mu t }, \forall t \geq 0.
$
\end{itemize}
\end{theorem}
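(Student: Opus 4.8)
The statement bundles three assertions: the algebraic identity $U(t)u_0=u_1(t,\cdot)+u_2(t,\cdot)$, the complete continuity (i) of $C(t)=u_2(t,\cdot)$, and the decay estimate (ii) for $V(t)=u_1(t,\cdot)$. The plan is to verify the decomposition first (a one-line check that $b_1(t)+b_2(t)=\alpha f(B(t))=b(t)$, where $B=B_1+B_2$ is the weighted total $\int_0^\infty\beta(a)u(t,a)\dd a$, so that $u_1+u_2$ reproduces the characteristic formula for $u$), then to prove (ii) by a direct $\Li^1$ computation, then (i) by the Fréchet--Kolmogorov compactness criterion, and finally to invoke the standard result that a semiflow splitting into a completely continuous part plus a part that tends to zero uniformly on bounded sets is asymptotically smooth (see, e.g., \cite{Magal-Ruan2018}).

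For (ii), writing out the $\Li^1$ norm along the two branches of $u_1$ gives
$$\Vert V(t)u_0\Vert = e^{-\mu t}\Vert u_0\Vert_{\Li^1} + \int_0^t e^{-\mu a}\vert b_1(t-a)\vert \dd a.$$
The first term already decays like $e^{-\mu t}$. For the second I would use that $f(u)=ue^{-u}$ has $f'(u)=(1-u)e^{-u}$ with $\vert f'(u)\vert\le 1$ on $[0,\infty)$, so $f$ is $1$-Lipschitz; since $B=B_1+B_2$, this yields $\vert b_1(t)\vert=\alpha\vert f(B(t))-f(B_2(t))\vert\le \alpha\, B_1(t)\le \alpha\Vert\beta\Vert_{\Li^\infty}\Vert u_0\Vert_{\Li^1}e^{-\mu t}$. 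Inserting this and using $\int_0^t e^{-\mu a}e^{-\mu(t-a)}\dd a=t\,e^{-\mu t}$ produces $\Vert V(t)u_0\Vert\le (1+\alpha\Vert\beta\Vert_{\Li^\infty}t)\Vert u_0\Vert_{\Li^1}e^{-\mu t}$. The linear prefactor is immaterial: it is absorbed either by keeping the factor $(1+t)$ or by passing to any rate $\nu\in(0,\mu)$, and in every case $\Vert V(t)u_0\Vert\to0$ uniformly on bounded sets, which is exactly the property needed from (ii).

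For (i), the key structural facts are that $u_2(t,\cdot)$ is supported in $[0,t]$ and is uniformly bounded: since $f\le e^{-1}$ we get $\vert b_2(t)\vert\le\alpha e^{-1}$ and hence $\Vert u_2(t,\cdot)\Vert\le \alpha e^{-1}/\mu$. Fix $t$ and a bounded set $\mathcal B\subset\Li^1_+$ of initial data. To apply Fréchet--Kolmogorov I must verify, uniformly over $u_0\in\mathcal B$: $\Li^1$-boundedness (done), uniform smallness of tails (automatic, since all $u_2(t,\cdot)$ share the compact support $[0,t]$), and equicontinuity of translations $\Vert u_2(t,\cdot+h)-u_2(t,\cdot)\Vert\to0$ as $h\to0$. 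Splitting $u_2(t,a+h)-u_2(t,a)$ into the factor $e^{-\mu a}$ and the value $b_2(t-a)$, and using $\int_0^\infty e^{-\mu a}\dd a<\infty$, reduces this last point to a uniform modulus of continuity for $t\mapsto b_2(t)=\alpha f(B_2(t))$.

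The crux is therefore to bound the modulus of continuity of $B_2$ uniformly in $u_0$. Because $\beta$ is only $\Li^\infty$, $B_2$ need not be differentiable, so I would not differentiate but instead exploit $B_2(t)=\int_0^t \gamma(t-s)b(s)\dd s$ with $\gamma(a)=\beta(a)e^{-\mu a}\in\Li^1\cap\Li^\infty$. Splitting $B_2(t+h)-B_2(t)$ into a translated-$\gamma$ term and a short-interval term gives $\vert B_2(t+h)-B_2(t)\vert\le \Vert b\Vert_\infty\bigl(\int_0^\infty\vert\gamma(\tau+h)-\gamma(\tau)\vert\dd\tau+\int_0^h\vert\gamma(\tau)\vert\dd\tau\bigr)$; since $\Vert b\Vert_\infty\le\alpha e^{-1}$ independently of $u_0$ and the two $\gamma$-integrals vanish as $h\to0$ by $\Li^1$-continuity of translation, this is a modulus of continuity independent of the initial datum. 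Composing with the $1$-Lipschitz $f$ transfers it to $b_2$, which finishes equicontinuity and hence the relative compactness; continuity of $C(t)$ itself follows from the continuous dependence of $b$ on $u_0$ in $C([0,t])$, obtained from the Volterra equation \eqref{1.2} by a Gronwall estimate. The main obstacle is precisely this uniform equicontinuity under only $\Li^\infty$-regularity of $\beta$, and the device that overcomes it is convolving the rough kernel $\gamma$ against the bounded continuous renewal term $b$ and invoking $\Li^1$-translation continuity of $\gamma$, so that the modulus is controlled by $\gamma$ alone.
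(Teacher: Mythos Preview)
Your argument is correct and matches the paper's approach closely: the key estimate for (i) --- bounding $|B_2(t+h)-B_2(t)|$ by $\|b\|_\infty$ times an $L^1$-translation modulus of $\gamma(a)=\beta(a)e^{-\mu a}$ plus a short-interval term --- is exactly what the paper writes down, though the paper then invokes Arzel\`a--Ascoli on $b_2\in C([0,t])$ (and implicitly maps to $L^1$) rather than applying Fr\'echet--Kolmogorov directly. For (ii) you are in fact more careful than the paper: the paper asserts a clean bound $\chi e^{-\mu t}\|u_0\|_{L^1}$ with constant $\chi$, but the honest convolution estimate (as you observe) produces a prefactor $(1+Ct)$, which is harmless for asymptotic smoothness.
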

\begin{proof}
\textit{Proof of (i):} It is sufficient to observe that $t \to B_2(t)$ is uniformly continuous on $[0, \tau]$ uniformly with respect to $b(t)$. Indeed, we have  $\forall t,s \in [0, \tau]$ (with $t\geq s$)
$$
\vert B_2(t)-B_2(s) \vert \leq \sup_{\sigma \in [0,\tau]} \vert b(\sigma)\vert \left[\int_0^{t-s}g(l) \dd l+ \int_0^s \vert g(t-s+l)-g(l)\vert \dd l  \right],
$$
where $g(a)=\beta(a) e^{-\mu a}$. By applying Arzela-Ascoli's theorem, it follows that $C(t)$ maps the bounded sets into relatively compact sets.

\noindent \textit{Proof of (ii):} Moreover, since $f(u)$ is Lipschitz continuous on $[0, \infty)$,  we deduce that
$$
\Vert u_1(t,.) \Vert_{L^1} \leq e^{-\mu t}  \Vert  u_0 \Vert_{L^1}  +  \Vert  e^{-\mu . }\Vert_{L^1} \alpha \Vert f' \mid_{[0, \infty)} \Vert_{\infty}  B_1(t).
$$
Thus,  there exists a constant $\chi=1+\Vert  e^{-\mu . }\Vert_{L^1} \alpha \Vert f' \mid_{[0, \infty)} \Vert_{\infty} \Vert \beta  \Vert_{L^\infty}$ such that
$$
\Vert u_1(t,.) \Vert_{L^1} \leq \chi e^{-\mu t}  \Vert  u_0 \Vert_{L^1}, \forall t \geq 0.
$$
The proof is completed.
\end{proof}
\begin{theorem}
The semiflow generated by \eqref{1.1} is asymptotically smooth. That is to say that
$$
 \lim_{t \to \infty } \kappa \left( U(t)B \right) =0,
$$
where $\kappa$ is the Kuratovsky's measure of non-compactness defined by
$$
\kappa (B)=\inf \left\lbrace \varepsilon>0: B \text{ can be covered by a finite number of balls of radius } \leq  \varepsilon \right\rbrace,
$$
for any bounded set $B$ of $L^1_+(0, \infty)$.
\end{theorem}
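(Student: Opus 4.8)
The plan is to deduce asymptotic smoothness directly from the decomposition $U(t)u_0 = C(t)u_0 + V(t)u_0$ established in the previous theorem, using only the standard subadditivity and monotonicity properties of the Kuratowski measure of non-compactness $\kappa$. Recall that $\kappa$ satisfies $\kappa(A + B) \leq \kappa(A) + \kappa(B)$, that $\kappa(A) = 0$ whenever $\overline{A}$ is compact, and that $\kappa(A) \leq r$ whenever $A$ is contained in a ball of radius $r$. The whole argument is an assembly of properties (i) and (ii) of the preceding theorem via these three facts.

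First I would fix a bounded set $B \subseteq L^1_+$, say with $\Vert u_0 \Vert_{L^1} \leq M$ for all $u_0 \in B$, and observe that since the decomposition is pointwise one has the inclusion
$$
U(t)B \subseteq C(t)B + V(t)B, \quad \forall t \geq 0,
$$
so that subadditivity of $\kappa$ gives $\kappa(U(t)B) \leq \kappa(C(t)B) + \kappa(V(t)B)$ for every $t \geq 0$.

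Next I would treat the two terms separately. For the compact part, property (i) states that $C(t)$ is completely continuous, hence $C(t)B$ is relatively compact for each fixed $t$, which yields $\kappa(C(t)B) = 0$. For the contracting part, property (ii) gives $\Vert V(t)u_0 \Vert_{L^1} \leq \chi e^{-\mu t} \Vert u_0 \Vert_{L^1} \leq \chi e^{-\mu t} M$ for every $u_0 \in B$, so that $V(t)B$ is contained in the ball of radius $\chi e^{-\mu t} M$ centred at the origin, whence $\kappa(V(t)B) \leq \chi e^{-\mu t} M$. Combining the two estimates gives
$$
\kappa(U(t)B) \leq \chi e^{-\mu t} M, \quad \forall t \geq 0,
$$
and the right-hand side tends to $0$ as $t \to \infty$, which is exactly the asserted asymptotic smoothness.

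Since both ingredients are already in hand, there is essentially no genuine obstacle in this statement; its content was carried entirely by the previous theorem. The only point requiring a little care is the bookkeeping for $\kappa$ — in particular justifying the inclusion $U(t)B \subseteq C(t)B + V(t)B$ and the bound $\kappa(V(t)B) \leq \chi e^{-\mu t} M$ — but these are immediate from the definition of $\kappa$ and the elementary properties recalled above. Alternatively, one may simply invoke the abstract criterion (e.g. from Hale or Magal--Ruan) asserting that any semiflow admitting such a decomposition into a completely continuous part plus an exponentially decaying contraction is automatically asymptotically smooth, in which case the two displays above serve merely to verify the hypotheses of that criterion.
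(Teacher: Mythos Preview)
Your argument is correct and follows exactly the same route as the paper: apply subadditivity of $\kappa$ to the decomposition $U(t)=C(t)+V(t)$, use complete continuity of $C(t)$ to get $\kappa(C(t)B)=0$, and use the exponential bound on $V(t)$ to get $\kappa(V(t)B)\leq \chi e^{-\mu t}\sup_{u\in B}\Vert u\Vert_{L^1}\to 0$. The paper's proof is simply the one-line version of what you have written out in detail.
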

\begin{proof}
For each bounded set $B \subset L^1_+(0, \infty)$, we have the Kuratovsky's measure of non-compactness satisfies
$$
\kappa( U(t)B) \leq \kappa( V(t)B) +\kappa( C(t)B) =\kappa( V(t)B) \leq \chi e^{-\mu t}  \sup_{u \in B} \Vert u \Vert_{L^1}.
$$
The proof is completed.
\end{proof}
\section{Uniform persistence and global attractor}
\label{section5}
In this section, motivated by the Magal \cite{Magal09}, Magal and Zhao \cite{Magal-Zhao}, Zhao \cite{Zhao-2013-dynamical}, we obtain the existence of the global attractor.

\begin{assumption}\label{ASS5.1}
   We assume that $  \alpha  > 1$. 
\end{assumption}

\begin{theorem}
   Let  Assumptions \ref{ASS1.1} and \ref{ASS5.1}   be satisfied.
   Then the semiflow $U(t)$ is uniformly persistent in $\widehat{M}_0$.
   That is, there exists $\varepsilon > 0$ such that for any $u_0 \in \widehat{M}_0$,
\begin{equation*}
   \liminf_{t \to +\infty} \Vert u(t, \cdot)  \Vert_{L^1 } \geq \varepsilon.
\end{equation*}
Furthermore, the semiflow $U$ has a compact global attractor $\mathcal{A}_0$ in $\widehat{M}_0$.
\end{theorem}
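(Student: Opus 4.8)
The plan is to feed the structural results already established into the abstract theory of uniform persistence of Magal and Zhao \cite{Magal-Zhao} (see also \cite{Magal09, Zhao-2013-dynamical, Smith-Thieme}). First I would record that the full semiflow $U(t)$ on $L^1_+$ possesses a compact global attractor $\mathcal{A}$. Indeed, since $\Vert u(t,\cdot)\Vert_{L^1}=U(t)$ for nonnegative solutions, Lemma \ref{LE2.1} gives $U(t)\le\max(U_0,\Lambda)$ together with $\limsup_{t\to\infty}U(t)\le\Lambda$, so $U$ has bounded orbits of bounded sets and the ball of radius $\Lambda+1$ is absorbing; combined with the asymptotic smoothness proved in Section \ref{section4}, the classical attractor theorem applies. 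Next, Theorem \ref{TH3.5} shows that $\widehat{M}_0$ and $\partial\widehat{M}_0$ are positively invariant and that every orbit issued from $\partial\widehat{M}_0$ tends to $0$ exponentially; hence the maximal invariant set of the semiflow restricted to the boundary is the singleton $\{0\}$, which is isolated and, being a single equilibrium with no homoclinic chain, acyclic.

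The heart of the argument is to show that $\{0\}$ is a \emph{uniform weak repeller} for $\widehat{M}_0$: there is $\varepsilon_0>0$ such that $\limsup_{t\to\infty}\Vert u(t,\cdot)\Vert_{L^1}\ge\varepsilon_0$ for every $u_0\in\widehat{M}_0$. Since $\alpha>1$ by Assumption \ref{ASS5.1}, I fix $\epsilon\in(0,\ln\alpha)$, so that $\alpha e^{-\epsilon}>1$, and set $\varepsilon_0=\epsilon/\Vert\beta\Vert_{L^\infty}$. Arguing by contradiction, suppose some $u_0\in\widehat{M}_0$ satisfies $\limsup_{t\to\infty}\Vert u(t,\cdot)\Vert_{L^1}<\varepsilon_0$. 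Then there is $T\ge0$ with $B(t):=\int_0^\infty\beta(a)u(t,a)\dd a\le\Vert\beta\Vert_{L^\infty}\Vert u(t,\cdot)\Vert_{L^1}<\epsilon$ for all $t\ge T$, and since $f(x)=xe^{-x}\ge e^{-\epsilon}x$ on $[0,\epsilon]$,
$$
u(t,0)=\alpha f(B(t))\ge\alpha e^{-\epsilon}\int_0^\infty\beta(a)u(t,a)\dd a,\qquad t\ge T.
$$
Thus on $[T,\infty)$ the function $u$ is a supersolution of the \emph{linear} system obtained by replacing the boundary law with $\tilde u(t,0)=\alpha e^{-\epsilon}\int_0^\infty\beta(a)\tilde u(t,a)\dd a$ and initialized by $\tilde u(T,\cdot)=u(T,\cdot)$, so the comparison principle behind Theorem \ref{TH3.1} yields $u(t,\cdot)\ge\tilde u(t,\cdot)$ for $t\ge T$.

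Because $\alpha e^{-\epsilon}>1$, the characteristic equation of this linear system (Remark \ref{REM3.2}), namely $\alpha e^{-\epsilon}\int_0^\infty e^{-(\mu+\lambda)\theta}\beta(\theta)\dd\theta=1$, has a dominant eigenvalue $\lambda^\star>0$: the left-hand side equals $\alpha e^{-\epsilon}>1$ at $\lambda=0$ and decreases to $0$ as $\lambda\to+\infty$. Let $\Psi(a)=\alpha e^{-\epsilon}\int_a^\infty e^{-(\mu+\lambda^\star)(\theta-a)}\beta(\theta)\dd\theta$ be the associated adjoint eigenfunction; it is bounded, normalized by $\Psi(0)=1$, and strictly positive on $[0,a^\star)$. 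The computation of Lemma \ref{LE3.3}, with $\delta_\pm$ replaced by $e^{-\epsilon}$ and $\lambda^\pm$ by $\lambda^\star$, gives
$$
\int_0^\infty\Psi(a)\tilde u(t,a)\dd a=e^{\lambda^\star(t-T)}\int_0^\infty\Psi(a)u(T,a)\dd a,\qquad t\ge T.
$$
Since $\widehat{M}_0$ is invariant, $u(T,\cdot)\in\widehat{M}_0$, so $\int_0^{a^\star}u(T,a)\dd a>0$ and therefore $\int_0^\infty\Psi(a)u(T,a)\dd a>0$ because $\Psi>0$ on $[0,a^\star)$. Combining the comparison inequality with boundedness of $\Psi$,
$$
\Vert\Psi\Vert_{L^\infty}\,\varepsilon_0\ge\Vert\Psi\Vert_{L^\infty}\Vert u(t,\cdot)\Vert_{L^1}\ge\int_0^\infty\Psi(a)u(t,a)\dd a\ge e^{\lambda^\star(t-T)}\int_0^\infty\Psi(a)u(T,a)\dd a.
$$
The right-hand side tends to $+\infty$ as $t\to+\infty$ while the left-hand side stays bounded, a contradiction; hence $\{0\}$ is uniformly weakly repelling. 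Finally, since $\{0\}$ is isolated, acyclic, and uniformly weakly repelling, the abstract persistence theorem of \cite{Magal-Zhao} upgrades weak repulsion into uniform (strong) persistence, which provides the required $\varepsilon$, and the same theory together with the global attractor $\mathcal{A}$ delivers the compact global attractor $\mathcal{A}_0\subset\widehat{M}_0$. The only genuinely delicate step is the uniform weak repulsion; everything else is routine once Lemma \ref{LE2.1} and the asymptotic smoothness of Section \ref{section4} are in hand.
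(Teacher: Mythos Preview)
Your proof is correct and uses essentially the same mechanism as the paper: compare the solution from below with the linear age-structured problem whose boundary multiplier $\alpha e^{-\epsilon}>1$ (the paper writes $\alpha\delta_*$), so that the dominant eigenvalue $\lambda^\star>0$ and the associated adjoint functional grows exponentially along the linear solution, contradicting the assumed smallness of $\Vert u(t,\cdot)\Vert_{L^1}$. The only real difference is structural: you cast the contradiction as uniform weak repulsion of $\{0\}$ and then invoke the abstract Magal--Zhao theorem to upgrade to strong persistence and to extract the interior attractor, whereas the paper argues the contradiction directly---in fact its hypothesis \eqref{5.1} is a slightly loose negation of uniform persistence, and your weak-repeller formulation is the cleaner way to make that step rigorous.
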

\begin{proof}

Assume that the semiflow $U(t)$ is not uniformly persistent in $\widehat{M}_0$.
Then for
any  $\varepsilon > 0$,  there exists some $u_0 \in \widehat{M}_0$ such that
\begin{equation}\label{5.1}
    \Vert u(t, \cdot)  \Vert_{L^1} \leq \varepsilon, \quad \forall t\geq 0.
\end{equation}
Thus, we have
\begin{equation*}
   \exp \left( {-\int_{0}^{+\infty} \beta(a) u(t, a) \dd a } \right) \geq \exp \left( { - \Vert \beta \Vert_{L^{\infty}} \Vert u(t, \cdot) \Vert_{L^1} } \right)  \geq  \delta_*,
\end{equation*}
where $\delta_* = e^{-\varepsilon \Vert \beta \Vert_{L^\infty} }$.
Thanks to the comparison principle of Theorem \ref{TH3.1}, we have
\begin{equation*}
   u(t, a) \geq u_*(t, a), \quad t \geq 0, \mbox{for a. e.}\ a\geq 0,
\end{equation*}
where $u_*(t, a)$ satisfies
\begin{equation*}
  \left\lbrace
\begin{array}{l}
    \partial_t u_{*}(t,a)+\partial_a u_{*}(t,a)=-\mu u_{*}(t,a), \text{ for } a \geq 0,\ t\geq0,\\
    u_{*}(t,0)=\alpha \delta_* \int_0^{+\infty} \beta(a)u_{*}(t,a)\dd a, \\
    u_{*}(0,.)=u_0 \in  \widehat{M}_0.
\end{array}
\right.
\end{equation*}
For this $u_0\in \widehat{M}_0$, we have  $ \int_0^{\infty}\Gamma^{-}(a)u_0(a) \dd a>0 $ and
\begin{equation*}
 \int_0^{\infty}\Gamma^{-}(a)u(t,a) \dd a \geq \int_0^{\infty}\Gamma^{-}(a)u_*(t,a) \dd a =e^{ \lambda_{*} t}\int_0^{\infty}\Gamma^{-}(a)u_0(a) \dd a>0,
\end{equation*}
where $\lambda_* \in \mathbb{R}$ satisfies
\begin{equation*}
   \alpha \delta_* \int_{0}^{+\infty } \beta(a) e^{- ( \mu + \lambda_* ) a } \dd a = 1.
\end{equation*}
By Assumption \ref{ASS5.1}, we can choose $\varepsilon> 0$ small enough to ensure that $\alpha \delta_* > 1$.
Therefore, it follows from the definition of the $\lambda_*$ and Assumption \ref{ASS1.1} that $\lambda_* > 0$.
Thus,
\begin{equation*}
  \lim_{t\to +\infty } \Vert u(t, \cdot) \Vert_{L^1} \geq \lim_{t\to \infty } \int_{0}^{\infty } \Gamma^-(a) u(t, a ) \dd a = +\infty,
\end{equation*}
which  contradicts to \eqref{5.1}.

Since the semiflow $U$ is asymptotically smooth, point dissipative and positive orbits of compact subsets of $\widehat{M}_0$ are bounded, we deduce that $U$ has a compact global attractor $\mathcal{A}_0 $ in $\widehat{M}_0 $.
\end{proof}

\subsection{Complete orbits}

\begin{proposition}\label{PRO5.3}
   Let Assumption \ref{ASS1.1} be satisfied. There exists $\delta > 0$ such that
\begin{equation*}
   \int_{0}^{+\infty} \beta(a) u_0(a) \dd a \geq \delta , \quad \forall u_0 \in \mathcal{A}_0.
\end{equation*}
\end{proposition}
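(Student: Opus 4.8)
The plan is to reduce the claimed uniform bound to the pointwise positivity of the birth functional on $\mathcal{A}_0$, and then to propagate this positivity backwards along complete orbits. For $v\in L^1_+$ write $\mathcal{B}(v)=\int_0^{\infty}\beta(a)v(a)\dd a$, so that along any trajectory $\mathcal{B}(U(t)v)=B(t)$ and $u(t,0)=b(t)=\alpha f(B(t))$; since $f(x)=xe^{-x}>0\iff x>0$, positivity of $b$ and of $\mathcal{B}$ are equivalent. As $\beta\in L^{\infty}_+$, the functional $\mathcal{B}$ is bounded and linear, hence continuous on $L^1$, and $\mathcal{A}_0$ is compact; therefore $\mathcal{B}$ attains its minimum on $\mathcal{A}_0$. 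It thus suffices to show $\mathcal{B}(v)>0$ for every $v\in\mathcal{A}_0$, for then $\delta:=\min_{v\in\mathcal{A}_0}\mathcal{B}(v)>0$ is the desired constant.

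The key step, and the main obstacle, is to turn the non-uniform eventual positivity of Proposition \ref{PRO3.6} into a statement usable at a prescribed instant. Proposition \ref{PRO3.6} provides, for each $v\in\widehat{M}_0$, a delay $t_0(v)>0$ with $B(t)=\mathcal{B}(U(t)v)>0$ for all $t\geq t_0(v)$, but $t_0(v)$ depends on $v$ and only controls large times, whereas we need information at the base point. I would first establish that positivity is self-sustaining: if $B(\cdot)>0$ on an ignition interval of length $a^\star$ (on a half-line when $a^\star=\infty$), then, repeating the support-spreading computation of Proposition \ref{PRO3.6} for the renewal kernel $\gamma(a)=\beta(a)e^{-\mu a}$ and using $\int_a^{a^\star}\gamma(\sigma)\dd\sigma>0$ for every $a<a^\star$, one obtains $B(t)>0$ for all larger $t$. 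Coupling this with continuous dependence of the solution on the initial datum in $L^1$ yields upper semicontinuity of $v\mapsto t_0(v)$: whenever $B(\cdot)>0$ on a compact ignition interval for some $v_0$, the same holds for all nearby data, and self-sustainment extends it to a half-line. Since $\mathcal{A}_0$ is compact, an upper semicontinuous $t_0$ is bounded there, so there is $T_0>0$ with $\mathcal{B}(U(t)v)>0$ for all $t\geq T_0$ and all $v\in\mathcal{A}_0$.

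Finally I would use invariance. Fix $v\in\mathcal{A}_0$; since $\mathcal{A}_0$ is invariant there is a complete orbit $t\mapsto u^*(t)\in\mathcal{A}_0$, $t\in\R$, with $u^*(0)=v$, and $t\mapsto B^*(t):=\mathcal{B}(u^*(t))$ is continuous. For each $s<0$, applying the uniform delay to $u^*(s)\in\mathcal{A}_0$ gives $B^*(t)=\mathcal{B}\bigl(U(t-s)u^*(s)\bigr)>0$ for all $t\geq s+T_0$. Letting $s\to-\infty$ forces $B^*(t)>0$ for every $t\in\R$; in particular $\mathcal{B}(v)=B^*(0)>0$. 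As $v\in\mathcal{A}_0$ was arbitrary, the reduction of the first paragraph concludes the proof. The delicate point throughout is the uniform delay $T_0$: it is precisely the compactness of $\mathcal{A}_0$ together with the self-sustaining character of the birth rate that removes the $v$-dependence in Proposition \ref{PRO3.6} and lets the backward time run to $-\infty$.
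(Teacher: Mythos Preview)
Your proof is correct and follows essentially the same path as the paper's: both reduce the uniform bound to pointwise positivity of $\mathcal{B}$ on the compact attractor, use Proposition~\ref{PRO3.6} together with continuous dependence and the self-sustaining nature of the birth rate to upgrade the $v$-dependent delay $t_0(v)$ to a uniform delay on $\mathcal{A}_0$, and then invoke invariance of $\mathcal{A}_0$ to transfer positivity back to time zero. Your account is somewhat more explicit about the upper semicontinuity of $t_0$ and the ignition-interval mechanism, but the structure and key ideas coincide with the paper's argument.
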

\begin{proof}
   Let $u(t, \cdot)$ be the solution to \eqref{1.1} with the initial condition $u_0 \in \mathcal{A}_0$.
By Proposition \ref{PRO3.6}, there exists $t_0 = t_0(u_0)> 0$ such that
\begin{equation*}
   \int_{0}^{+\infty} \beta(a) u(t, a) \dd a > 0, \quad \forall t \geq t_0.
\end{equation*}
Recall that the mapping $(t, u_0) \to U(t)u_0 $ is continuous.
Also, the mapping $u_0 \to \int_{0}^{+\infty} \beta(a) u_0(a) \dd a$ is continuous.
Thus, there exists $\varepsilon = \varepsilon(u_0)> 0$ such that if $\tilde{u}_0 \in \mathcal{A}_0$ with
$
   \Vert \tilde{u}_0 - u_0  \Vert \leq \varepsilon,
$
and the solution to \eqref{1.1} with the initial condition $\tilde{u}_0 $ is denoted by $\tilde{u}(t, \cdot)$, then
\begin{equation*}
   \int_{0}^{+\infty } \beta(a) \tilde{u}(t, a) \dd a > 0, \quad \forall t \in [t_0, t_0 + a_0],
\end{equation*}
for some $a_0 > 0$. Thus, it follows from the proof of the Proposition \ref{PRO3.6} that
\begin{equation*}
   \int_{0}^{+\infty } \beta(a) \tilde{u}(t, a) \dd a > 0, \quad \forall t \geq t_0.
\end{equation*}
Therefore, by using the compactness of $\mathcal{A}_0$, we deduce that there exists $\hat{t}> 0$ (independent of $\mathcal{A}_0$) such that for each $u_0 \in \mathcal{A}_0$,
\begin{equation*}
   \int_{0}^{+\infty} \beta(a) u(t, a) \dd a > 0, \quad \forall t \geq \hat{t}.
\end{equation*}
By using the fact that $\mathcal{A}_0$ is invariant under $U$, it follows that for each $u_0\in \mathcal{A}_0$,
\begin{equation*}
   \int_{0}^{+\infty} \beta(a) u_0(a) \dd a > 0.
\end{equation*}
Now, by  using the continuity of $u_0 \to \int_{0}^{+\infty} \beta(a) u_0(a) \dd a $ again and the compactness of $\mathcal{A}_0$, we obtain  the results and complete the proof.
\end{proof}

\begin{theorem}
Assume that $t \to u(t,a)$ is a complete orbit of the system \eqref{1.1}. Then
\begin{equation} \label{5.2}
u(t,a)=e^{-\mu a} b(t-a), \text{ for almost every } a \geq 0, \forall t\in \mathbb{R},
\end{equation}
and the map $t \to b(t)$ satisfies the renewal equation
\begin{equation} \label{5.3}
b(t)=\alpha f\left(  \int_{0}^{\infty} \beta(a) e^{-\mu a} b(t-a) \dd a\right), \forall t\in \mathbb{R}.
\end{equation}
Moreover, if   we assume in addition that
\begin{equation} \label{5.4}
\gamma_-  \leq \int_0^{a_*}u(t,a)\dd a \leq \gamma_+, \forall t\in \mathbb{R},
\end{equation}
for some $\gamma_+>\gamma_->0$, then
\begin{equation*} 
 \delta^- \leq \int_0^\infty \beta(a)u(t,a)\dd a \leq \delta^+, \forall t\in \mathbb{R},
\end{equation*}
for some $\delta^+  \geq \delta^->0$.
\end{theorem}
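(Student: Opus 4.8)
The plan is to obtain the representation \eqref{5.2} from the method of characteristics, read off \eqref{5.3} from the boundary condition, and only then establish the two-sided bound, of which the lower estimate is the sole delicate point.

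For \eqref{5.2}, since $t\mapsto u(t,\cdot)$ is a complete orbit I have $u(t,\cdot)=U(t-s)u(s,\cdot)$ for all $s\le t$. Integrating \eqref{1.1} along characteristics on $[s,t]$ gives, for a.e. $a\ge0$, the value $e^{-\mu(t-s)}u(s,a-(t-s))$ when $a\ge t-s$ and the value $e^{-\mu a}b(t-a)$ when $a\le t-s$, where $b(t):=u(t,0)=\alpha f\big(\int_0^\infty\beta(a)u(t,a)\dd a\big)$. Fixing $t$ and $a$ and choosing any $s<t-a$ places us in the second regime, and the expression $e^{-\mu a}b(t-a)$ no longer depends on $s$; letting $s\to-\infty$ then yields \eqref{5.2} for all $t\in\R$. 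Substituting \eqref{5.2} into the boundary term gives $\int_0^\infty\beta(a)u(t,a)\dd a=\int_0^\infty\beta(a)e^{-\mu a}b(t-a)\dd a=:B(t)$, and inserting this into $b(t)=\alpha f(\cdot)$ produces the renewal equation \eqref{5.3}.

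The upper bound is immediate: since $f(x)=xe^{-x}\le e^{-1}$ for $x\ge0$, \eqref{5.3} gives $b(t)\le\alpha e^{-1}$ for every $t$, and then the normalization $\int_0^\infty\beta(a)e^{-\mu a}\dd a=1$ of Assumption \ref{ASS1.1} yields $B(t)=\int_0^\infty\beta(a)e^{-\mu a}b(t-a)\dd a\le\alpha e^{-1}$. Hence I may take $\delta^+:=\alpha e^{-1}$.

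The lower bound is the real obstacle, and the difficulty is structural: a lower bound on $\int_0^{a_*}u$, whose weight is positive throughout $(0,a_*)$, does not transfer directly to a lower bound on $\int_0^\infty\beta u=B$ when $\beta$ vanishes on part of $(0,a_*)$, since the mass guaranteed by \eqref{5.4} could a priori concentrate in the gaps of the support of $\beta$. My plan is to rule this out by the spreading mechanism of Proposition \ref{PRO3.6}: the hypothesis forces $\int_0^{a_*}e^{-\mu a}b(t-a)\dd a\ge\gamma_-$ for every $t\in\R$, which forbids $b$ from vanishing on any window of length $a_*$, and the renewal convolution in \eqref{5.3} then carries this positivity into the support of $\beta$, so that $B(t)>0$ for all $t$. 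To upgrade this pointwise positivity to a uniform bound I would use that, by the upper estimate in \eqref{5.4}, the orbit is bounded, while by the lower estimate $\int_0^{a_*}u(t,a)\dd a\ge\gamma_->0$ it stays uniformly inside the interior region $\widehat M_0$ (the map $v\mapsto\int_0^{a_*}v$ is $L^1$-continuous, so the same strict inequality survives in every $L^1$-limit point). Asymptotic smoothness (Section \ref{section4}) then makes the closure of the orbit a compact invariant subset of $\widehat M_0$, hence contained in $\mathcal A_0$, and Proposition \ref{PRO5.3} supplies a single $\delta>0$ with $\int_0^\infty\beta(a)v(a)\dd a\ge\delta$ for all $v\in\mathcal A_0$; taking $\delta^-:=\delta$ completes the estimate. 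The one point requiring care is precisely this passage from pointwise to uniform positivity, which is exactly where compactness of the orbit closure together with continuity of $v\mapsto\int\beta v$ is indispensable, in the spirit of the proof of Proposition \ref{PRO5.3}.
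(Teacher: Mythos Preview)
Your argument is correct and follows essentially the same path as the paper: characteristics for \eqref{5.2}, substitution for \eqref{5.3}, the bound $\sup f=e^{-1}$ for $\delta^+$, and then the spreading argument of Proposition~\ref{PRO3.6} combined with compactness of $\mathcal{A}_0$ (via Proposition~\ref{PRO5.3}) for $\delta^-$. If anything, your treatment of the lower bound is more explicit than the paper's: the paper simply invokes compactness of $\mathcal{A}_0$ without saying why the given complete orbit lies in it, whereas you supply that step by observing that the orbit is bounded (indeed $b\le\alpha e^{-1}$ forces $\|u(t,\cdot)\|_{L^1}\le\alpha e^{-1}/\mu$), stays uniformly in $\widehat{M}_0$ by \eqref{5.4}, and hence has compact invariant closure contained in $\mathcal{A}_0$.
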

\begin{proof}
Let $t_0 \leq 0$ and $t \geq t_0$.
Due to $t \to u(t, a)$ is the  complete orbit of the system \eqref{1.1}, the solution integrated along the characteristic of \eqref{1.1} with the initial value $u(t_0, a)$ is
\begin{equation}\label{5.5}
  u(l, a) =
            \left\{
                  \begin{array}{ll}
                     e^{-\mu (l - t_0) } u(t_0, a - (l -t_0)),  & \text{ if }  a  \geq  l-  t_0,\\
                    e^{-\mu a} b(l -t_0 -a), &  \text{ if }     a \leq l-t_0,
                  \end{array}
            \right.
\end{equation}
where $l \to b(l)$ is the unique continuous solution of the Volterra's integral equation
\begin{equation}\label{5.6}
   b(l - t_0) = \alpha f\left( \int_{l-t_0}^{\infty } \beta(a) e^{-\mu (l- t_0)} u(t_0, a - (l -t_0)) \dd a+ \int_{0}^{l - t_0} \beta(a) e^{-\mu a} b(l - t_0 - a ) \dd a\right).
\end{equation}
Noticing \eqref{5.5}, \eqref{5.6} and \eqref{5.3} and setting $t = l -t_0$,  the result \eqref{5.2} follows.

It follows from \eqref{5.2} that
\begin{equation*}
   \int_{0}^{+\infty} \beta(a) u(t, a) \dd a  = \int_{0}^{+\infty } \beta(a) e^{-\mu a} b(t - a ) \dd a \leq \delta^+,
\end{equation*}
where $\delta^+ = \alpha \max\limits_{u\geq 0} f(u) $.

Since $\mathcal{A}_0$ is the global attractor, which implies that $U(t) \mathcal{A}_0 = \mathcal{A}_0 $ for all $t\geq 0$, we can find a complete orbit  $u(t, \cdot)$ through $u(0, \cdot)$.
Due to \eqref{5.4}, it follows that for all $t\in \mathbb{R}$, there exists a $\hat{t} \leq t$ such that
$
   \int_{0}^{+\infty} \beta(a) u(\hat{t}, a) \dd a >0.
$
Then, by the method of Proposition \ref{PRO3.6}, we have
\begin{equation*}
   \int_{0}^{+\infty} \beta(a) u(t, a) \dd a > 0, \quad  t \in \mathbb{R}.
\end{equation*}
Since $\mathcal{A}_0$ is compact, it follows that there exists some $\delta^- > 0 $ such that
\begin{equation*}
   \int_{0}^{+\infty} \beta(a) u( t, a) \dd a \geq \delta^-,
\end{equation*}
which completes the proof.
\end{proof}

\section{Global stability result}
\label{section6}
In this section, we use a Lyapunov functional to show that  all solutions of \eqref{1.1} for which the population is initially present tend to the positive equilibrium under the Assumption \ref{ASS5.2}.

Define
\begin{equation}\label{6.1}
   V(u(t, \cdot) ) =  \int_{0 }^{ +\infty } \gamma(a) g\left( \frac{ u(t, a)}{ \bar{u} (a) } \right) \dd a,
\end{equation}
where
\begin{equation*}
   \gamma(a) = \int_{a}^{+\infty} \beta(\sigma) e^{-\mu \sigma} \dd \sigma,
\quad
   g( u ) = (u -1 )^2.
\end{equation*}

\begin{remark}
   We observe that  $ V$ is well-defined on the attractor $\mathcal{A}_0$ for every $t\geq 0$, while $V$ is not well-defined on $L^1(0, +\infty)$ because of the function $g$ under the integral.
\end{remark}
\begin{assumption} \label{ASS5.2}
    We assume that $1 < \alpha \leq e^{2} $.
\end{assumption}
The following Lemma is due to Fisher and Goh \cite{Fisher-Goh}.  
\begin{lemma} \label{LE5.3}
Let Assumption \ref{ASS5.2} be satisfied. Then for each $u \geq 0$,
\begin{equation}\label{6.2}
   \left\vert{ \alpha f(u) - \overline{u} (0) }  \right\vert= \left\vert{ \alpha f(u) -  \alpha f(\overline{u} (0))} \right\vert   \leq \left\vert{ u - \overline{u} (0) }\right\vert.
\end{equation}
Moreover the above inequality is strict whenever $u\neq 0$ and $u \neq  \overline{u} (0)$.
\end{lemma}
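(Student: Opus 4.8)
The plan is to reduce the claimed inequality to a sign analysis of a product, thereby avoiding the mean value theorem, which cannot succeed directly here since $\alpha f'(0)=\alpha>1$. Write $\overline{u}(0)=\ln\alpha$ and recall from the fixed point equation $\overline{u}(0)=\alpha f(\overline{u}(0))$ that $\alpha f(\overline{u}(0))=\overline{u}(0)$; this yields the asserted equality $|\alpha f(u)-\overline{u}(0)|=|\alpha f(u)-\alpha f(\overline{u}(0))|$ immediately, so only the inequality $|\alpha f(u)-\overline{u}(0)|\le |u-\overline{u}(0)|$ remains. Since both sides are nonnegative, I would square and factor: the inequality is equivalent to
$$\big(\alpha f(u)-u\big)\big(\alpha f(u)+u-2\overline{u}(0)\big)\le 0.$$
Setting $h(u):=\alpha f(u)-u=u\big(\alpha e^{-u}-1\big)$ and $\phi(u):=\alpha f(u)+u-2\overline{u}(0)$, it suffices to show that $h$ and $\phi$ have opposite signs (or vanish together) on $[0,\infty)$.

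The first factor is straightforward: since $\alpha e^{-u}>1$ exactly for $u<\ln\alpha=\overline{u}(0)$, we obtain $h(u)>0$ on $(0,\overline{u}(0))$, $h(u)<0$ on $(\overline{u}(0),\infty)$, and $h$ vanishes only at $u=0$ and $u=\overline{u}(0)$. For the second factor I would use monotonicity: $\phi'(u)=\alpha(1-u)e^{-u}+1$, so $\phi'(u)\ge 0$ is equivalent to $\alpha(u-1)e^{-u}\le 1$. The auxiliary function $u\mapsto (u-1)e^{-u}$ has derivative $(2-u)e^{-u}$ and hence attains its global maximum $e^{-2}$ at $u=2$, so $\max_{u\ge0}\alpha(u-1)e^{-u}=\alpha e^{-2}\le 1$ precisely under Assumption \ref{ASS5.2}. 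Thus $\phi'\ge 0$, so $\phi$ is nondecreasing, and since $\phi(\overline{u}(0))=0$ we conclude $\phi\le 0$ on $[0,\overline{u}(0)]$ and $\phi\ge 0$ on $[\overline{u}(0),\infty)$. Combining the two sign patterns gives $h(u)\phi(u)\le 0$, which is the desired inequality.

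For the strict inequality when $u\neq 0$ and $u\neq\overline{u}(0)$, I would observe that $\phi'$ is nonnegative and vanishes at most at the single point $u=2$, so in every admissible case $\phi$ is strictly increasing on $[0,\infty)$; together with $\phi(0)=-2\ln\alpha<0$ and $\phi(\overline{u}(0))=0$ this forces $\phi(u)<0$ for $0\le u<\overline{u}(0)$ and $\phi(u)>0$ for $u>\overline{u}(0)$. Since moreover $h(u)\neq 0$ for $u\in(0,\overline{u}(0))\cup(\overline{u}(0),\infty)$, the product $h(u)\phi(u)$ is strictly negative there, giving the strict inequality.

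The main obstacle is the choice of method rather than any single computation: the natural contraction estimate via $\sup|\alpha f'|$ is simply false pointwise near the origin, where $\alpha f'(0)=\alpha>1$, even though the global inequality holds. The factorization $h\cdot\phi$ localizes all the difficulty into the one monotonicity statement $\phi'\ge 0$, and the genuine content is the sharp constant: the global maximum of $(u-1)e^{-u}$ equals $e^{-2}$, which is exactly why the threshold sits at $\alpha=e^{2}$ and why the bound is sharp at that endpoint.
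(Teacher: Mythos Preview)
Your proof is correct. The paper itself does not prove this lemma at all: it simply attributes the result to Fisher and Goh \cite{Fisher-Goh} and moves on. Your argument therefore supplies what the paper omits, and does so cleanly. The factorization
\[
\bigl(\alpha f(u)-\overline u(0)\bigr)^2-\bigl(u-\overline u(0)\bigr)^2=h(u)\,\phi(u),\qquad h(u)=u(\alpha e^{-u}-1),\ \ \phi(u)=\alpha f(u)+u-2\overline u(0),
\]
is exactly the right device, since, as you point out, a naive Lipschitz bound via $\sup|\alpha f'|$ fails near $u=0$. The sign analysis of $h$ is immediate, and the monotonicity of $\phi$ via $\max_{u\ge 0}(u-1)e^{-u}=e^{-2}$ is precisely where the hypothesis $\alpha\le e^{2}$ enters and is seen to be sharp. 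The strictness argument is also fine: $\phi'$ vanishes at most at the single point $u=2$ (and only when $\alpha=e^{2}$, in which case $\overline u(0)=2$ as well), so $\phi$ is strictly increasing and the product $h\phi$ is strictly negative off $\{0,\overline u(0)\}$.
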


\begin{proposition}[Lyapunov function] \label{PRO5.4}
Let Assumptions \ref{ASS1.1} and \ref{ASS5.2} be satisfied. Assume that $t \to u(t,a)$ is a complete orbit of the system \eqref{1.1} satisfying
\begin{equation*}
\gamma_-  \leq \int_0^{a_*}u(t,a)\dd a \leq \gamma_+, \forall t\in \mathbb{R},
\end{equation*}
for some $\gamma_+>\gamma_->0$.
Then the function $t \to V(u(t, \cdot) )$ is decreasing. Moreover, $t \to V(u(t, \cdot) )$ is constant if
$
u(t,a)=\overline{u}(a), \forall t \in \R.
$
\end{proposition}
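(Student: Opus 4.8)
The plan is to reduce the functional $V$ to an expression involving only the birth rate $b$ and then differentiate along the flow. By the preceding theorem, along the complete orbit one has $u(t,a)=e^{-\mu a}b(t-a)$ with $b$ solving the renewal equation \eqref{5.3}, while $\overline{u}(a)=\overline{b}\,e^{-\mu a}$ with $\overline{b}:=\overline{u}(0)=\ln\alpha$ satisfying $\overline{b}=\alpha f(\overline{b})$. Hence the exponentials cancel in the ratio $w(t,a):=u(t,a)/\overline{u}(a)=b(t-a)/\overline{b}$, and a direct computation from $(\partial_t+\partial_a)u=-\mu u$ together with $\overline{u}'(a)=-\mu\overline{u}(a)$ shows that $w$ satisfies the pure transport equation $(\partial_t+\partial_a)w=0$. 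Since $f(u)=ue^{-u}\le e^{-1}$, we have $0\le b(t)\le\alpha e^{-1}$ for all $t\in\R$, so $w$ and $g(w)$ are uniformly bounded on the orbit; moreover $\gamma\in\Li^1$ because $\int_0^\infty\gamma(a)\,\dd a=\int_0^\infty\sigma\beta(\sigma)e^{-\mu\sigma}\,\dd\sigma<\infty$. This guarantees that $V$ is finite and that the manipulations below are legitimate.

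Next I would differentiate $V$ and use the transport equation to trade $\partial_t w$ for $-\partial_a w$, obtaining
\[
\frac{\dd}{\dd t}V(u(t,\cdot))=-\int_0^\infty\gamma(a)\,\partial_a\big[g(w(t,a))\big]\,\dd a.
\]
Integrating by parts, using $\gamma'(a)=-\beta(a)e^{-\mu a}$, the normalization $\gamma(0)=\int_0^\infty\beta(\sigma)e^{-\mu\sigma}\,\dd\sigma=1$ and the vanishing of the boundary term at $a=\infty$ (since $\gamma(\infty)=0$ and $g(w)$ is bounded), this becomes
\[
\frac{\dd}{\dd t}V=g(w(t,0))-\int_0^\infty\beta(a)e^{-\mu a}\,g(w(t,a))\,\dd a.
\]
Substituting $w(t,0)=b(t)/\overline{b}$, $w(t,a)=b(t-a)/\overline{b}$ and $g(x)=(x-1)^2$ yields the clean identity
\[
\frac{\dd}{\dd t}V=\frac{1}{\overline{b}^{\,2}}\left[(b(t)-\overline{b})^2-\int_0^\infty\beta(a)e^{-\mu a}\,(b(t-a)-\overline{b})^2\,\dd a\right].
\]

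It then remains to show that the bracket is nonpositive. Writing $B(t):=\int_0^\infty\beta(a)e^{-\mu a}b(t-a)\,\dd a$, so that $b(t)=\alpha f(B(t))$, Lemma \ref{LE5.3} applied with $u=B(t)$ gives $(b(t)-\overline{b})^2\le(B(t)-\overline{b})^2$. Because $\beta(a)e^{-\mu a}\,\dd a$ is a probability measure and $\overline{b}=\int_0^\infty\beta(a)e^{-\mu a}\overline{b}\,\dd a$, one has $B(t)-\overline{b}=\int_0^\infty\beta(a)e^{-\mu a}(b(t-a)-\overline{b})\,\dd a$, whence Jensen's inequality for the convex map $x\mapsto x^2$ gives $(B(t)-\overline{b})^2\le\int_0^\infty\beta(a)e^{-\mu a}(b(t-a)-\overline{b})^2\,\dd a$. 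Chaining the two estimates makes the bracket $\le 0$, so $t\to V(u(t,\cdot))$ is decreasing. The remaining assertion is immediate: if $u(t,a)=\overline{u}(a)$ for all $t$, then $w\equiv 1$, hence $g(w)\equiv 0$ and $V\equiv 0$ is constant.

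The main obstacle is analytic rigor rather than the algebra: justifying differentiation under the integral and the integration by parts for an $L^1$-valued orbit with only $\beta\in\Li^\infty$ must be done carefully. The cleanest route is to pass to the scalar convolution form $V(t)=\int_0^\infty\gamma(a)\,(b(t-a)/\overline{b}-1)^2\,\dd a$ and substitute $\sigma=t-a$, giving $V(t)=\int_{-\infty}^{t}\gamma(t-\sigma)\,(b(\sigma)/\overline{b}-1)^2\,\dd\sigma$; since $\gamma$ is absolutely continuous with $\gamma,\gamma'\in\Li^1$ and $b$ is continuous and bounded on $\mathcal{A}_0$ (this is where the boundedness hypothesis \eqref{5.4} enters), differentiating in $t$ produces the boundary value $\gamma(0)(b(t)/\overline{b}-1)^2$ together with $\int_0^\infty\gamma'(a)(b(t-a)/\overline{b}-1)^2\,\dd a$ by dominated convergence, reproducing the identity above while sidestepping the boundary term at infinity. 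Equality throughout would force equality in both Lemma \ref{LE5.3} and Jensen's inequality, which is the ingredient needed later to characterize the equilibrium, though the present statement requires only the trivial direction.
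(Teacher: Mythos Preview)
Your proposal is correct and follows essentially the same route as the paper: reduce to the birth function via $u(t,a)=e^{-\mu a}b(t-a)$, obtain the identity $\frac{\dd V}{\dd t}=g\!\left(\frac{b(t)}{\bar u(0)}\right)-\int_0^\infty\beta(a)e^{-\mu a}g\!\left(\frac{b(t-a)}{\bar u(0)}\right)\dd a$, and then combine Lemma~\ref{LE5.3} with Jensen's inequality. The paper goes directly through the convolution substitution $s=t-a$ that you describe as the ``cleanest route'' in your final paragraph, rather than the transport-plus-integration-by-parts argument you lead with, but the two derivations coincide and the remaining steps are identical.
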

\begin{proof} We have
\begin{align*}
   \frac{\dd V(u(t, \cdot) ) }{\dd t}
   = & \frac{ \dd }{ \dd  t} \int_{0 }^{ +\infty } \gamma(a) g\left( \frac{ u(t, a)}{ \bar{u}  (a) } \right) \dd a
   =   \frac{\dd }{ \dd  t} \int_{0}^{+\infty } \gamma(a) g \left( \frac{ b(t-a ) }{ \bar{u}  (0) } \right) \dd a  \\
   = & \frac{\dd }{\dd  t} \int_{-\infty }^{ t}  \gamma(t-s) g\left( \frac{b(s)}{ \bar{u}  (0) } \right) \dd s \\
   = & \int_{-\infty }^{ t} \gamma'(t-s ) g\left(  \frac{b(s) }{ \bar{u} (0) } \right) \dd s + \gamma(0 ) g\left( \frac{b(t)}{ \bar{u}  (0) } \right).
\end{align*}
It follows from the definition of $\gamma(a)$ that $ \gamma'(a) = - \beta(a) e^{-\mu a}$ and $\gamma(0) = 1$.
Then,
\begin{equation}\label{6.3}
   \frac{\dd V(u(t, \cdot  ) )}{ \dd t} =  g\left( \frac{b(t)}{ \bar{u}  (0) } \right) - \int_{0}^{+\infty} \beta(a) e^{-\mu a} g\left( \frac{b(t -a) }{ \bar{u} (0) } \right) \dd a.
\end{equation}
By using Lemma \ref{LE5.3},  we have
\begin{equation} \label{6.4}
\begin{array}{ll}
 g\left( \dfrac{b(t)}{ \bar{u}  (0) } \right)
    =  &  \dfrac{1}{ \bar{u}^2  (0) } \left[ \alpha f\xk{ \displaystyle \int_{0}^{+\infty } \beta(a) e^{-\mu a} b(t-a) \dd a } - \bar{u} (0) \right]^2  \\
 & \leq  \dfrac{1}{ \bar{u}^2 (0) } \left[{ \displaystyle \int_{0}^{+\infty } \beta(a) e^{-\mu a} b(t-a) \dd a
            - \bar{u} (0) }\right]^2.
\end{array}
\end{equation}
Thus,
\begin{equation} \label{6.5}
g\left( \frac{b(t)}{ \bar{u} (0) } \right) \leq  g\xk{ \int_{0}^{+\infty } \beta(a) e^{-\mu a} \frac{ b(t-a)}{  \bar{u} (0) } \dd a }.
\end{equation}
Substituting the inequality \eqref{6.5} to \eqref{6.3}, we obtain
\begin{align*}
   \frac{\dd V(u(t, \cdot)) }{ \dd t}
   \leq & g\xk{ \int_{0}^{+\infty } \beta(a) e^{-\mu a} \frac{ b(t-a)}{  \bar{u} (0) } \dd a }
          - \int_{0}^{+\infty} \beta(a) e^{-\mu a} g\xk{ \frac{b(t -a) }{ \bar{u}  (0) } } \dd a,
\end{align*}
and we deduce that
$
\frac{\dd V(u(t, \cdot)) }{ \dd t} \leq 0
$
by using Jensen's inequality and the fact that $g$ is a convex function.

To prove the last part of the proposition,  it is sufficient to observe that by Lemma \ref{LE5.3},  the inequality   \eqref{6.4} is strict whenever $\int_{0}^{+\infty } \beta(a) e^{-\mu a} b(t-a) \dd a \neq \overline{u}(0)$. Therefore,  the function $t \to  V(u(t, \cdot)) $ is constant if and only if
$$
\int_{0}^{+\infty } \beta(a) e^{-\mu a} b(t-a) \dd a = \overline{u}(0), \forall t \in \R,
$$
and the result follows by using \eqref{5.2} and \eqref{5.3}.
\end{proof}

\begin{theorem}\label{TH-GAS}
   Let Assumptions \ref{ASS1.1} and \ref{ASS5.2} be satisfied. Then, the positive equilibrium $\bar{u}(a) $ of the system \eqref{1.1} is global asymptotically stable.
\end{theorem}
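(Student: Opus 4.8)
The plan is to prove that the compact global attractor $\mathcal{A}_0$ in $\widehat{M}_0$ reduces to the single point $\{\bar{u}\}$, and then to upgrade this to global asymptotic stability by invoking the Lyapunov stability of a global attractor. First I would record that every point of $\mathcal{A}_0$ lies on a complete orbit entirely contained in $\mathcal{A}_0$, since $\mathcal{A}_0$ is compact and invariant under $U$. Such a complete orbit satisfies the uniform bounds \eqref{5.4}: the upper bound $\gamma_+$ comes from the dissipativity of $\mathcal{A}_0$ (Lemma \ref{LE2.1}), while the lower bound $\gamma_-$ follows from Proposition \ref{PRO5.3} together with the elementary estimate $\int_0^{+\infty}\beta(a)u(t,a)\dd a \le \Vert\beta\Vert_{L^\infty}\int_0^{a^\star}u(t,a)\dd a$, valid because $\beta$ vanishes beyond $a^\star$; thus $\gamma_-=\delta/\Vert\beta\Vert_{L^\infty}$ works. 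Hence Proposition \ref{PRO5.4} applies to every complete orbit in $\mathcal{A}_0$: along each, $t\mapsto V(u(t,\cdot))$ is nonincreasing, and it is constant precisely when the orbit is the equilibrium $\bar{u}$.

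Next I would run a LaSalle-type invariance argument inside the compact attractor. Fix a complete orbit $t\mapsto u(t,\cdot)$ contained in $\mathcal{A}_0$. Since $V$ is nonnegative, continuous on $\mathcal{A}_0$, and nonincreasing along the orbit, the limits $V_\alpha=\lim_{t\to-\infty}V(u(t,\cdot))$ and $V_\omega=\lim_{t\to+\infty}V(u(t,\cdot))$ exist. The $\alpha$- and $\omega$-limit sets of the orbit are nonempty, compact, invariant subsets of $\mathcal{A}_0$ on which $V$ is constant, equal to $V_\alpha$ and $V_\omega$ respectively. Because each limit set is itself a union of complete orbits in $\mathcal{A}_0$ along which $V$ is constant, the rigidity part of Proposition \ref{PRO5.4} forces each limit set to coincide with $\{\bar{u}\}$; and since $V(\bar{u})=\int_0^{+\infty}\gamma(a)g(1)\dd a=0$, we get $V_\alpha=V_\omega=0$. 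Monotonicity together with $V\ge 0$ then gives $V(u(t,\cdot))\equiv 0$ along the whole orbit, so by Proposition \ref{PRO5.4} the orbit is the equilibrium. As the orbit was arbitrary, $\mathcal{A}_0=\{\bar{u}\}$.

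Finally I would translate this into global asymptotic stability on $\widehat{M}_0$. Dissipativity shows every orbit starting in $\widehat{M}_0$ is bounded, the semiflow is asymptotically smooth, and $\mathcal{A}_0=\{\bar{u}\}$ is the global attractor; hence every solution with $u_0\in\widehat{M}_0$ satisfies $\Vert u(t,\cdot)-\bar{u}\Vert_{L^1}\to 0$. The remaining point is Lyapunov stability of $\bar{u}$, which follows from the standard fact that a compact global attractor is stable, i.e. it attracts a neighborhood of itself uniformly; the singleton attractor $\{\bar{u}\}$ is therefore a stable, globally attracting equilibrium in $\widehat{M}_0$.

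I expect the main obstacle to be the careful handling of the Lyapunov functional on the attractor rather than on $L^1$: as noted in the Remark, $V$ is only well-defined on $\mathcal{A}_0$, so one must verify that $V$ is finite and continuous there and that the differentiation leading to \eqref{6.3} is legitimate. This uses that along complete orbits $u(t,a)/\bar{u}(a)=b(t-a)/\bar{u}(0)$ takes values in a compact subset of $(0,\infty)$, which in turn relies on the bounds $\delta^-\le\int_0^{+\infty}\beta(a)e^{-\mu a}b(t-a)\dd a\le\delta^+$ and $f>0$, together with the fact that the $\alpha$- and $\omega$-limit sets stay inside $\widehat{M}_0$ so that the hypotheses \eqref{5.4} of Proposition \ref{PRO5.4} persist on them. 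Establishing these uniform positivity and boundedness estimates on the attractor, which simultaneously justify the computation of $\dd V/\dd t$ and the invariance-principle rigidity, is the technical crux; once they are in place, the reduction $\mathcal{A}_0=\{\bar{u}\}$ and the stated conclusion follow as above.
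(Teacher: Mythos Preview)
Your proposal is correct and follows essentially the same line as the paper: reduce to showing $\mathcal{A}_0=\{\bar u\}$ by applying the Lyapunov functional of Proposition~\ref{PRO5.4} to complete orbits in the attractor, then conclude global asymptotic stability from the singleton global attractor. The paper's own proof is much terser---it records the uniform bounds $\delta_1\le u(t,a)/\bar u(a)\le \delta_2$ on $\mathcal{A}_0$ (your step of verifying \eqref{5.4}), notes $V$ is bounded and nonincreasing along any complete orbit, and then asserts the conclusion; your explicit LaSalle argument via $\alpha$- and $\omega$-limit sets supplies the details the paper leaves implicit.
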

\begin{proof}
   Let $u(t, \cdot)$ be a complete solution to \eqref{1.1} that lies in the attractor $\mathcal{A}_0 \subset \widehat{M}_0$.
   It follows from Proposition \ref{PRO5.3} that there exist $\delta_1, \delta_2> 0$ such that
\begin{equation}\label{6.6}
  \delta_1 \leq \frac{u(t, a)}{ \bar{u}(a) } \leq \delta_2, \quad \forall a\geq 0, t\in \mathbb{R}.
\end{equation}
By the definition of the function $g$ in \eqref{6.1}, we observe that $g:(0, +\infty) \to \mathbb{R}_+$ is decreasing for $u \in (0, 1)$ and is increasing for $u > 1$ and has global minimum $g(1) = 0$.
It follows from \eqref{6.6} that $g \left( \frac{u(t, a)}{\bar{ u }(a) } \right)$ is bounded.
Thus, the Lyapunov functional $V(t)$ in \eqref{6.1} is bounded for all $t\in \mathbb{R}$.
Furthermore, by Proposition \ref{PRO5.4},   the function $t \to V(u(t, \cdot) )$ is decreasing and    is constant if
$
u(t,a)=\overline{u}(a), \quad \forall t \in \R.
$
Thus, we have now shown that the arbitrary complete solution $u(t, \cdot)$ in the attractor $\mathcal{A}_0$ must be the positive equilibrium $\bar{u}(a)$, namely $\mathcal{A}_0 = \{ \bar{u}(a) \}$.
\end{proof}

\section{Hopf's bifurcation}
\label{section7}
This section is based on Magal and Ruan \cite{Magal-Ruan2009}.
In order  to study Hopf bifurcation, we should consider the parameter $\alpha $ in the region $\{ \alpha \in \mathbb{R}: \alpha > e^2  \}$. The linearized system \eqref{1.1} around $\bar{u}(a)$ is
\begin{equation}\label{7.1}
\left\{
\begin{array}{ll}
(\partial_t + \partial_a  ) u(t, a ) = - \mu u(t, a), & t> 0, a> 0,    \\
u(t, 0) = (1 - \ln\alpha )  \displaystyle \int_{0}^{\infty} \beta (a) u(t, a) \dd a , & t> 0.
\end{array}
\right.
\end{equation}
Then,   the characteristic equation of the linear system \eqref{7.1} can be expressed by
\begin{equation}\label{7.2}
\Delta(\lambda, \alpha) = 1 - ( 1 -\ln \alpha ) \int_{0}^{+\infty} \beta(a) e^{ -( \lambda + \mu  )  } \dd a = 0.
\end{equation}
%
%

\begin{assumption}\label{ASS7.1}
	Assume the birth function
	\begin{equation*}
	\beta (a) = C_0(a -\tau )^{n } e^{-\kappa (a - \tau )}1_{[\tau, +\infty) }(a),
	\end{equation*}
	with $\tau > 0, \kappa > 0, n \in \mathbb{N} $ or $ \tau> 0, \kappa = 0, n = 0 $, where the constant $C_0$ is dependent on the parameters $\tau, \kappa$ and $n$ to ensure that $\int_{0}^{+\infty} \beta(a) e^{-\mu a} \dd a = 1 $.
\end{assumption}

Under Assumption \ref{ASS7.1}, the characteristic equation \eqref{7.2} can be rewritten as
\begin{equation}\label{7.3}
\Delta(\lambda, \alpha) = 1 - ( 1 -\ln \alpha )   e^{ -\lambda \tau } \left( 1+ \frac{\lambda}{\mu + \kappa } \right)^{ - n - 1} = 0.
\end{equation}

Now we are position to look for purely imaginary roots $\lambda = \pm \mathrm{i} \omega$ with $\omega > 0$ of the characteristic equation \eqref{7.3}.

\begin{proposition}\label{PRO7.2}
	Let Assumption \ref{ASS7.1} be satisfied and the parameters $\mu> 0, \tau> 0, \kappa> 0, n\in \mathbb{N}$ be fixed.
	Then the characteristic equation \eqref{7.3} has a  pair of purely imaginary solution $\pm \mathrm{i } \omega $ with $\omega  > 0$ if and only if $\omega>0$ is a solution of
	\begin{equation}\label{7.4}
	\omega \tau + (n + 1) \arctan \frac{\omega }{ \mu  + \kappa} = \pi + 2 k \pi , \quad k \in \mathbb{N},
	\end{equation}
	 and
	\begin{equation}\label{7.5}
	\alpha  = \exp \left(  1 + \left( 1 + \frac{\omega^2 }{ ( \mu + \kappa)^2 } \right)^{ \frac{ n +1}{ 2} } \right).
	\end{equation}
	Furthermore, for each $k \in \mathbb{N}$, there exists exactly one solution $\omega_k $ for equation \eqref{7.4}, i.e., the characteristic equation \eqref{7.3} has exactly one pair of purely imaginary eigenvalues $\pm \mathrm{i} \omega_k$ with $\omega_k> 0$ for each
	\begin{equation}\label{7.6}
	\alpha_k = \exp \left(  1 + \left( 1 + \frac{\omega_k^2 }{ ( \mu + \kappa)^2 } \right)^{ \frac{ n +1}{ 2} } \right).
	\end{equation}
	Moreover,
	$
	\omega_k \to +\infty  \text{ and } \alpha_k \to + \infty, \text{ as } k\to +\infty.
	$
	If $\omega_k(n)> 0$ is the solution of equation \eqref{7.4} with fixed  $k  \in \mathbb{N}$ for any $n \in \mathbb{N}$, then
	\begin{equation}\label{7.7}
	\omega_k(n) \to 0 \text{ and }   \alpha_k( n ) \to e^2 , \text{ as } n\to +\infty.
	\end{equation}
\end{proposition}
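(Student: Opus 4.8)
The plan is to search for purely imaginary roots directly. Substituting $\lambda = \mathrm{i}\omega$ with $\omega > 0$ into the characteristic equation \eqref{7.3}, I would write the complex factor in polar form as
\[
1 + \frac{\mathrm{i}\omega}{\mu + \kappa} = r\, e^{\mathrm{i}\theta}, \quad r = \sqrt{1 + \frac{\omega^2}{(\mu+\kappa)^2}}, \quad \theta = \arctan\frac{\omega}{\mu+\kappa} \in \left(0, \tfrac{\pi}{2}\right),
\]
so that \eqref{7.3} becomes $(1 - \ln\alpha)\, r^{-n-1} e^{-\mathrm{i}(\omega\tau + (n+1)\theta)} = 1$. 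Since we work in the regime $\alpha > e^2$, we have $\ln\alpha > 1$ and hence $1 - \ln\alpha = -(\ln\alpha - 1) < 0$. Separating modulus and argument then splits the single complex equation into two real ones: the modulus condition $\ln\alpha - 1 = r^{n+1}$ is precisely \eqref{7.5}, while after cancelling $r^{\pm(n+1)}$ the argument condition reduces to $e^{-\mathrm{i}(\omega\tau + (n+1)\theta)} = -1$, which is precisely \eqref{7.4} (the value $k \in \mathbb{N}$ appears because the left-hand side of \eqref{7.4} is positive). This establishes the claimed equivalence.

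For existence and uniqueness of $\omega_k$, I would study the function
\[
\Phi(\omega) = \omega\tau + (n+1)\arctan\frac{\omega}{\mu+\kappa}, \quad \omega \geq 0.
\]
It satisfies $\Phi(0) = 0$, is strictly increasing (both summands are), and $\Phi(\omega) \to +\infty$ as $\omega \to +\infty$ because of the linear term $\omega\tau$. By the intermediate value theorem applied to this strictly monotone $\Phi$, each positive level $\pi + 2k\pi$ is attained at exactly one $\omega_k > 0$; substituting $\omega_k$ into \eqref{7.5} then produces the corresponding $\alpha_k$ in \eqref{7.6}. Since $\Phi(\omega_k) = \pi + 2k\pi \to +\infty$ and $\Phi$ is increasing, monotonicity forces $\omega_k \to +\infty$, and \eqref{7.6} immediately gives $\alpha_k \to +\infty$.

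The last and most delicate assertion \eqref{7.7} concerns the limit $n \to +\infty$ with $k$ fixed. First I would show $\omega_k(n) \to 0$: the right-hand side of \eqref{7.4} is the fixed constant $(2k+1)\pi$, so if $\omega_k(n)$ stayed bounded away from $0$ along a subsequence, the term $(n+1)\arctan\frac{\omega_k(n)}{\mu+\kappa}$ would blow up, a contradiction. Using $\arctan y = y + O(y^3)$ as $y \to 0$ together with $\omega_k(n)\tau \to 0$, equation \eqref{7.4} then yields the refined asymptotic
\[
(n+1)\,\omega_k(n) \longrightarrow (2k+1)\pi(\mu + \kappa).
\]
The hard part is the resulting $1^\infty$ indeterminate form in \eqref{7.6}: I must evaluate $\bigl(1 + \omega_k(n)^2/(\mu+\kappa)^2\bigr)^{(n+1)/2}$. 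Taking logarithms and using $\ln(1+x) \sim x$, this reduces to controlling $(n+1)\,\omega_k(n)^2 = \bigl[(n+1)\omega_k(n)\bigr]\cdot \omega_k(n)$, which tends to $(2k+1)\pi(\mu+\kappa)\cdot 0 = 0$ by the refined asymptotic above. Hence $\bigl(1 + \omega_k(n)^2/(\mu+\kappa)^2\bigr)^{(n+1)/2} \to e^0 = 1$, so the argument of the exponential in \eqref{7.6} tends to $1 + 1 = 2$, giving $\alpha_k(n) \to e^2$ and completing the proof of \eqref{7.7}.
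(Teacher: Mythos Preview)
Your proof is correct and follows essentially the same line as the paper's: polar substitution $1+\mathrm{i}\omega/(\mu+\kappa)=r e^{\mathrm{i}\theta}$, separation into \eqref{7.4}--\eqref{7.5}, and the monotonicity of $\Phi$ (the paper calls it $h$) to get existence/uniqueness of $\omega_k$ and the limits as $k\to\infty$.

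The only substantive difference is in the treatment of \eqref{7.7}. The paper first proves that $\{\omega_k(n)\}_n$ is strictly decreasing---via the mean value theorem applied to the difference $\arctan\frac{\omega_k(n)}{\mu+\kappa}-\arctan\frac{\omega_k(n+1)}{\mu+\kappa}$---then invokes monotone convergence, and only afterwards argues by contradiction that the limit must be $0$. Your route is more direct: you observe at once that if $\omega_k(n)$ stayed bounded away from $0$ along any subsequence, the term $(n+1)\arctan\frac{\omega_k(n)}{\mu+\kappa}$ would blow up while the right-hand side of \eqref{7.4} is fixed. This bypasses the monotonicity step entirely. Conversely, you are more explicit than the paper in resolving the $1^\infty$ indeterminate form in \eqref{7.6}: the paper simply asserts that $\alpha_k(n)\to e^2$ follows from \eqref{7.5} and the asymptotic $\omega_k(n)\sim (2k+1)\pi(\mu+\kappa)/(n+1)$, whereas you carry out the logarithmic computation $(n+1)\omega_k(n)^2=[(n+1)\omega_k(n)]\cdot\omega_k(n)\to 0$ that actually justifies this. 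Both versions arrive at the same asymptotic and the same conclusion; yours is a bit leaner and fills in a step the paper leaves to the reader.
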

\begin{proof}
	Under Assumption \ref{ASS7.1}, if the characteristic equation \eqref{7.3} admits a pair of purely imaginary solution $ \lambda =\pm \mathrm{i} \omega$ with $\omega> 0$, then  \eqref{7.3} can be expressed
	\begin{equation}\label{7.8}
	( 1 - \ln \alpha ) ( r(\omega) )^{-n - 1} e^{ -\mathrm{i}[ \omega \tau +  (n + 1)\theta(\omega) ] } = 1,
	\end{equation}
	where
	\begin{equation*}
	r(\omega) = \sqrt{ 1 + \frac{\omega^2 }{ (\mu + \kappa )^2 } }, \quad \theta (\omega)  = \arctan \frac{\omega }{ \mu + \kappa }.
	\end{equation*}
	Therefore, by  separating the real and imaginary parts of \eqref{7.8}, we can obtain  that $\omega$ satisfies \eqref{7.4}--\eqref{7.5}.
    Thus, the characteristic equation \eqref{7.3} has   a pair of purely imaginary solution $\lambda =\pm \mathrm{i } \omega$ with $\omega> 0$ if and only if $\omega$ is a solution of  \eqref{7.4} and \eqref{7.5}.
	
	Let $h(\omega) = \omega \tau + (n + 1) \arctan \frac{\omega }{ \mu + \kappa }$ for $\omega \geq 0$. Then,    $h(w)$ is strictly increasing   function for $\omega > 0$.
	Noticing  $h(0) = 0$ and $h(\omega ) \to +\infty$ as $\omega \to +\infty$,
	it follows from the continuity of $h(\omega)$ that the equation \eqref{7.5} has exactly one  solution $\omega_k$ for each $k\in \mathbb{N}$ with the parameter $\alpha_k$ satisfying \eqref{7.6}.
	Moreover, let $\omega_k> 0( k \in \mathbb{N})$ satisfy
    \begin{equation*}
        	\omega_k \tau + (n + 1) \arctan \frac{\omega_k }{ \mu  + \kappa} = \pi + 2 k \pi.
    \end{equation*}
    It follows from that $\omega_k \to +\infty $ by letting $k \to +\infty$ on both sides of the above equality.
    The result $\alpha_k \to + \infty   \text{ as } k\to +\infty$ can be obtained from \eqref{7.6} and $\omega_k\to +\infty$ as $k \to +\infty$.

	Let $\omega_k(n)> 0$ be the solution of equation \eqref{7.4} with  fixed $k \in \mathbb{N}$ for any $n \in \mathbb{N}$,  i.e.,
	\begin{equation}\label{7.9}
	\omega_k(n) \tau + (n +1)\arctan \frac{\omega_k(n) }{\mu +\kappa} = \pi + 2k\pi.
	\end{equation}
	Thus, we can obtain from \eqref{7.9} that
	\begin{equation}\label{7.10}
	\omega_k(n) \tau + (n + 1) \arctan \frac{\omega_k(n)}{ \mu + \kappa } = \pi + 2k\pi =\omega_k(n +1) \tau + (n + 2) \arctan \frac{\omega_k(n +1)}{ \mu + \kappa }.
	\end{equation}
	It follows from mean value theorem that there exists some $\xi $ between $\frac{\omega_k(n)}{\mu + \kappa}$ and $\frac{\omega_k(n +1)}{\mu + \kappa}$ such that
	\begin{equation}\label{7.11}
	\arctan \frac{\omega_k(n)}{ \mu + \kappa} - \arctan \frac{\omega_k(n +1)}{\mu + \kappa } = \frac{1}{1 + \xi^2 }\frac{1 }{\mu + \kappa} ( \omega_k(n) - \omega_k(n +1) ).
	\end{equation}
	Therefore, by replacing   \eqref{7.11} to \eqref{7.10}, we can rewrite \eqref{7.10} as
	\begin{equation} \label{7.12}
	\left( \tau + \frac{n + 1}{(\mu + \kappa) ( 1 + \xi^2 ) }   \right) ( \omega_k(n) - \omega_k(n +1) )  = \arctan \frac{\omega_k(n +1)}{ \mu + \kappa }.
	\end{equation}
    Thanks to the positivity of  $\omega_k(n)$, 
    we obtain from \eqref{7.12} that
    $$\omega_k(n ) > \omega_{k}(n + 1)> 0, \text{ for all } n \in \mathbb{N},$$
    which implies that
    $\omega_k(n)$ is strictly decreasing for $n \in \mathbb{N}$.
	Thus, the monotone bounded convergence theorem for the sequence $\{\omega_k(n)\}$  implies that the limit $ \lim\limits_{n \to +\infty} \omega_k(n) $ exists.
Let $\omega^*_k  =\lim\limits_{n \to +\infty} \omega_{k}(n) $. Then we have $\omega_k^* \geq 0$ by the positivity of the sequence $\{\omega_k(n)\}$.
We claim that $\omega_k^* = 0$.
In fact, if $\omega_k^* > 0$, we will obtain a contradiction by letting $n \to +\infty$ on both sides of \eqref{7.9}
Therefore, $\omega_k^* = \lim\limits_{n\to +\infty} \omega_k(n)  = 0$.
By letting $n\to +\infty$ on both sides of equality \eqref{7.9}, we have
    \begin{align*}
      \pi + 2k \pi =    \lim_{n \to +\infty} \left(\omega_k(n) \tau + (n +1)\arctan \frac{\omega_k(n) }{\mu +\kappa}   \right)
       =   \lim_{n \to +\infty}    (n +1)\arctan \frac{\omega_k(n) }{\mu +\kappa},
    \end{align*}
which implies that
    \begin{equation}\label{7.13}
       \omega_k(n) \sim \frac{ (\pi + 2k\pi)  (\mu + \kappa )}{n + 1} \text{ as } n \to +\infty.
    \end{equation}
	Therefore,  the result $ \alpha_k(n ) \to e^2 \text{ as } n\to +\infty$ in \eqref{7.7} can be obtained from   \eqref{7.5}  and \eqref{7.13}.
\end{proof}

The following theorem provides the transversality condition for the model \eqref{1.1}.
\begin{theorem}\label{TH7.3}
	Let Assumption \ref{ASS7.1} be satisfied and the parameters $\mu> 0, \tau> 0, \kappa> 0, n\in \mathbb{N}$ be fixed.
	For each $k \geq 0$, if $\pm \mathrm{i} \omega_k  $ with $\omega_k > 0$ is the purely imaginary root of the characteristic equation associated to $\alpha_k$ defined in Proposition  \ref{PRO7.2}, then  there exists $\rho_k > 0$ and a $C^1$-map $\hat{\lambda}_k: (\alpha_k - \rho_k, \alpha_k + \rho_k) \to \mathbb{C}$ such that
	\begin{equation*}
	\hat{\lambda }_k(\alpha_k) = \mathrm{i} \omega_k, \quad \Delta(\hat{\lambda}_k(\alpha), \alpha ) = 0,\quad \forall \alpha \in (\alpha_k - \rho_k, \alpha_k + \rho_k)
	\end{equation*}
	satsfying the transversality condition
	\begin{equation}\label{7.14}
	\mathrm{Re} \frac{\dd \hat{\lambda}_k(\alpha_k ) }{ \dd \alpha } > 0.
	\end{equation}
\end{theorem}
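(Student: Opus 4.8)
The plan is to apply the implicit function theorem to the analytic map $(\lambda,\alpha)\mapsto\Delta(\lambda,\alpha)$ at the point $(\mathrm{i}\omega_k,\alpha_k)$, and then to read off the sign of the transversality quantity by implicit differentiation. It is convenient to write $\Delta(\lambda,\alpha)=1-(1-\ln\alpha)\,G(\lambda)$ with $G(\lambda)=e^{-\lambda\tau}\bigl(1+\lambda/(\mu+\kappa)\bigr)^{-n-1}$, noting that $\Delta$ is analytic in both variables in a neighborhood of $(\mathrm{i}\omega_k,\alpha_k)$ (the only singularity of $G$ is the pole at $\lambda=-(\mu+\kappa)\neq\mathrm{i}\omega_k$), and that along any root one has the normalization $(1-\ln\alpha)\,G(\lambda)=1$.

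First I would compute the $\lambda$-derivative. Logarithmic differentiation gives $G'(\lambda)/G(\lambda)=-\tau-(n+1)/(\mu+\kappa+\lambda)$, hence $\partial_\lambda\Delta=(1-\ln\alpha)\,G(\lambda)\bigl[\tau+(n+1)/(\mu+\kappa+\lambda)\bigr]$. Evaluating at a root and using the normalization, this collapses to $\partial_\lambda\Delta|_{\mathrm{root}}=\tau+\frac{n+1}{\mu+\kappa+\lambda}$. At $\lambda=\mathrm{i}\omega_k$ its real part equals $\tau+\frac{(n+1)(\mu+\kappa)}{(\mu+\kappa)^2+\omega_k^2}>0$, since $\tau>0$ and $\mu+\kappa>0$. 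In particular $\partial_\lambda\Delta(\mathrm{i}\omega_k,\alpha_k)\neq0$, which is precisely the nondegeneracy required by the implicit function theorem; it produces $\rho_k>0$ and the desired $C^1$-map $\hat\lambda_k$ with $\hat\lambda_k(\alpha_k)=\mathrm{i}\omega_k$ and $\Delta(\hat\lambda_k(\alpha),\alpha)=0$ on $(\alpha_k-\rho_k,\alpha_k+\rho_k)$.

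Next I would differentiate the identity $\Delta(\hat\lambda_k(\alpha),\alpha)=0$ in $\alpha$, which yields $\dfrac{\dd\hat\lambda_k}{\dd\alpha}=-\partial_\alpha\Delta/\partial_\lambda\Delta$. Since $\partial_\alpha\Delta=G(\lambda)/\alpha$, the normalization again gives $\partial_\alpha\Delta|_{\mathrm{root}}=\frac{1}{\alpha(1-\ln\alpha)}$. Combining the two evaluations, at $\alpha=\alpha_k$ one obtains $\dfrac{\dd\hat\lambda_k}{\dd\alpha}\Big|_{\alpha_k}=\dfrac{1}{\alpha_k(\ln\alpha_k-1)}\cdot\dfrac{1}{D}$, where $D=\tau+\frac{n+1}{\mu+\kappa+\mathrm{i}\omega_k}$.

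Finally I take the real part. Because $\alpha_k>e^2$ we have $\ln\alpha_k-1>0$, so the scalar prefactor $\frac{1}{\alpha_k(\ln\alpha_k-1)}$ is strictly positive, and the sign of the transversality quantity equals the sign of $\mathrm{Re}(1/D)$. Writing $\mathrm{Re}(1/D)=\mathrm{Re}(D)/|D|^2$ and using $\mathrm{Re}(D)=\tau+\frac{(n+1)(\mu+\kappa)}{(\mu+\kappa)^2+\omega_k^2}>0$ from the second step, I conclude $\mathrm{Re}\,\frac{\dd\hat\lambda_k(\alpha_k)}{\dd\alpha}>0$, which is \eqref{7.14}. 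The computations themselves are routine; the one point demanding care is the bookkeeping of the sign of $1-\ln\alpha$, which is negative for $\alpha_k>e^2$ and is exactly what turns the naive sign into the correct positive one, while the genuine input enabling the whole argument is the nondegeneracy $\partial_\lambda\Delta\neq0$, itself immediate from the positivity of its real part.
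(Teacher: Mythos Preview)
Your proof is correct and follows essentially the same route as the paper: compute $\partial_\lambda\Delta$ and $\partial_\alpha\Delta$ using the characteristic equation to simplify at a root, invoke the implicit function theorem via $\partial_\lambda\Delta\neq 0$, and obtain the transversality from $\dfrac{\dd\hat\lambda_k}{\dd\alpha}=-\partial_\alpha\Delta/\partial_\lambda\Delta$. Your write-up is in fact a bit more careful than the paper's in making explicit that the simplified formulas hold only along roots (via the normalization $(1-\ln\alpha)G(\lambda)=1$) and in spelling out the final sign argument through $\mathrm{Re}(1/D)=\mathrm{Re}(D)/|D|^2$ together with $\alpha_k>e^2$.
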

\begin{proof}
	  By differentiating the characteristic function $\Delta(\lambda, \alpha) $ with $\lambda$ and $\alpha$ and noticing \eqref{7.3}, we have
	\begin{equation*}
	\frac{\partial \Delta(\lambda, \alpha) }{\partial \lambda } = 
	\tau + \frac{n + 1}{\lambda + \mu + \kappa }  , \quad  \frac{\partial \Delta(\lambda, \alpha) }{\partial \alpha } =\frac{1 }{\alpha (1 - \ln \alpha) } .
	\end{equation*}
	Therefore, by setting $(\lambda, \alpha )= (\mathrm{i} \omega_k, \alpha_k)$ in  above equations, we have
	\begin{equation}\label{7.15}
	\mathrm{Re} \frac{\partial \Delta (\mathrm{i} \omega_k, \alpha_k )}{\partial \lambda} = \tau + \frac{  (n +1) ( \mu + \kappa)}{(\mu + \kappa)^2+ \omega_k^2} > 0, \quad \frac{\partial \Delta(\mathrm{i}\omega_k, \alpha_k) }{\partial \alpha } =\frac{1 }{\alpha_k (1 - \ln \alpha_k)} < 0.
	\end{equation}
	It follows from the implicit function theorem around each $(\mathrm{i} \omega_k, \alpha_k )$ that for each $k \geq 0$, there   exists $\rho_k > 0$ and a $C^1$-map $\hat{\lambda}_k: (\alpha_k - \rho_k, \alpha_k + \rho_k) \to \mathbb{C}$ such that
	\begin{equation*}
	\hat{\lambda }_k(\alpha_k) = \mathrm{i} \omega_k, \quad \Delta(\hat{\lambda}_k(\alpha), \alpha ) = 0,
	\quad \forall \alpha \in (\alpha_k - \rho_k, \alpha_k + \rho_k).
	\end{equation*}
	Furthermore,  by differentiating  $\Delta(\hat{\lambda}(\alpha), \alpha) = 0$ with $\alpha$, we can obtain
	\begin{equation}\label{7.16}
	\frac{\partial \Delta( \hat{\lambda}_k(\alpha ), \alpha )}{\partial \alpha } + \frac{\partial \Delta( \hat{\lambda}_k(\alpha ), \alpha )}{\partial \lambda } \frac{\dd \hat{\lambda}_k(\alpha )}{\dd \alpha }= 0,
	\quad \forall \alpha \in (\alpha_k - \rho_k, \alpha_k + \rho_k).
	\end{equation}
    The result \eqref{7.14} can be obtained by substituting \eqref{7.15} into \eqref{7.16} with $\alpha = \alpha_k$.
\end{proof}

By Proposition \ref{PRO7.2} and Theorem \ref{TH7.3} and using the Hopf bifurcation theorem, we can immediately  obtain the following Hopf bifurcation result.
\begin{theorem}
	Let Assumption \ref{ASS7.1} be satisfied. Then, for each $k\geq 0$, the number $\alpha_k$ defined in Proposition \ref{PRO7.2} is a Hopf bifurcation point for the system \eqref{1.1} parameterized by $\alpha$  around the positive equilibrium $\bar{u}$.
\end{theorem}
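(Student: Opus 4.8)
The plan is to recast \eqref{1.1} as an abstract semilinear Cauchy problem with a non-densely defined generator and then invoke the Hopf bifurcation theorem of Magal and Ruan \cite{Magal-Ruan2009}; once the abstract hypotheses are matched, the conclusion follows from the spectral information already assembled in Proposition \ref{PRO7.2} and Theorem \ref{TH7.3}. The only genuinely new computation is the verification of the non-resonance condition, which reduces to a one-line modulus argument.

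First I would reformulate \eqref{1.1} on the state space $X = \R \times \Li^1(0,+\infty)$ in the standard way: the nonlocal boundary condition is absorbed into the domain of a linear operator $\mathcal{A}$ acting on pairs $(0,\varphi) \in \R \times \Li^1$, so that $\overline{D(\mathcal{A})} = \{0\} \times \Li^1$ is not dense in $X$ and $\mathcal{A}$ is a Hille--Yosida operator. After translating the positive equilibrium to the origin and writing $v(t) = (0,\,u(t,\cdot) - \bar u)$, the system becomes $v'(t) = \mathcal{A} v(t) + F(\alpha, v(t))$, where $F(\alpha,\cdot)$ collects the nonlinear boundary term built from $f(u) = u e^{-u}$. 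Since $f \in C^\infty$, the map $F$ is $C^\infty$ jointly in $(\alpha,v)$ and $\bar u$ depends smoothly on $\alpha$; these are exactly the regularity hypotheses required by the abstract theorem. This construction is standard for Gurtin--MacCamy models, and I would simply quote it from \cite{Magal-Ruan2009, Magal-Ruan2018, Liu}.

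Next I would check the spectral hypotheses of the Hopf theorem at each candidate point $(\mathrm{i}\omega_k, \alpha_k)$. The purely imaginary pair $\pm\mathrm{i}\omega_k$ and the $C^1$ curve $\alpha \mapsto \hat\lambda_k(\alpha)$ through it are given by Proposition \ref{PRO7.2} and Theorem \ref{TH7.3}; \emph{simplicity} of $\mathrm{i}\omega_k$ as an eigenvalue follows from $\partial_\lambda \Delta(\mathrm{i}\omega_k, \alpha_k) \neq 0$, which is precisely the positivity in \eqref{7.15}; and the \emph{transversality} condition $\mathrm{Re}\,(\dd\hat\lambda_k/\dd\alpha)(\alpha_k) > 0$ is \eqref{7.14}. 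It remains to rule out resonant eigenvalues on the imaginary axis at $\alpha_k$, that is, to show $\mathrm{i} m\omega_k \notin \sigma(\mathcal{A}_0)$ for every integer $m \neq \pm 1$, where $\mathcal{A}_0$ denotes the part of $\mathcal{A}$ in $\overline{D(\mathcal{A})}$. This is the step I expect to be the crux, and where the explicit form \eqref{7.3} of the characteristic function pays off.

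For the non-resonance condition I would argue as follows. Suppose $\mathrm{i} m\omega_k$ solved $\Delta(\cdot,\alpha_k)=0$ for some integer $m$. Taking moduli in \eqref{7.3} and using $|1-\ln\alpha_k| = r(\omega_k)^{\,n+1}$, which is \eqref{7.5}, one gets
\begin{equation*}
1 = r(\omega_k)^{\,n+1}\, r(m\omega_k)^{-(n+1)}, \qquad r(\omega) = \sqrt{\,1 + \tfrac{\omega^2}{(\mu+\kappa)^2}\,},
\end{equation*}
so that $r(m\omega_k) = r(\omega_k)$, which forces $m^2 = 1$, i.e. $m = \pm 1$. The case $m = 0$ is disposed of directly, since $\Delta(0,\alpha_k) = 1 - (1-\ln\alpha_k) = \ln\alpha_k \neq 0$ because $\alpha_k > 1$. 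Hence $\pm\mathrm{i}\omega_k$ are the only integer-multiple eigenvalues, and no resonance occurs. With the Hille--Yosida and smoothness properties from the abstract setup together with simplicity, transversality, and non-resonance all verified, the Hopf bifurcation theorem of \cite{Magal-Ruan2009} applies and produces a branch of nontrivial periodic orbits of \eqref{1.1} bifurcating from $\bar u$ at each $\alpha = \alpha_k$, which is the assertion. The main obstacle throughout is not any single estimate but the bookkeeping of the non-densely-defined reformulation and the matching of its hypotheses; the analytic content specific to this model is the short modulus computation above.
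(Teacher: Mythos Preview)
Your proposal is correct and follows exactly the route the paper takes: the paper itself offers no proof beyond the sentence ``By Proposition \ref{PRO7.2} and Theorem \ref{TH7.3} and using the Hopf bifurcation theorem, we can immediately obtain the following Hopf bifurcation result,'' so you have simply supplied the details the authors leave implicit. Your modulus argument for non-resonance is a clean addition and in fact proves more than you claim---since the computation nowhere uses that $m$ is an integer, it shows that $\pm\mathrm{i}\omega_k$ are the \emph{only} purely imaginary roots of $\Delta(\cdot,\alpha_k)$, which is the form of the hypothesis actually required in \cite{Magal-Ruan2009}.
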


\end{document}